\documentclass[reqno,final,a4paper]{amsart}
\usepackage{color}
\usepackage{amsmath, amssymb, amsthm, thmtools}
\usepackage[colorlinks=true,allcolors=blue
]{hyperref}
\usepackage{mathrsfs}
\usepackage{mathtools}
\usepackage[linesnumbered,ruled,vlined]{algorithm2e}

\SetCommentSty{mycommfont}
\SetKwInOut{Input}{input}\SetKwInOut{Output}{output}

\usepackage[noadjust]{cite}
\usepackage[noabbrev,capitalize,nameinlink]{cleveref}
\crefname{equation}{}{}
\usepackage{fullpage}
\usepackage{graphics}
\usepackage{tikzscale}
\usepackage{pifont}
\usepackage{tikz}
\usetikzlibrary{fit, backgrounds, shapes, calc}
\usepackage{pgfplots}
\pgfplotsset{compat=1.12}
\usepackage{bbm}
\usepackage[T1]{fontenc}
\usetikzlibrary{arrows.meta}

\usepackage{environ}
\usepackage{framed}
\usepackage{url}

\usepackage[noend]{algpseudocode}
\usepackage[labelfont=bf]{caption}
\usepackage{framed}
\usepackage[framemethod=tikz]{mdframed}
\usepackage{appendix}
\usepackage{graphicx}
\usepackage[textsize=tiny]{todonotes}
\usepackage{tcolorbox}
\usepackage{xcolor}
\usepackage[shortlabels]{enumitem}
\crefformat{enumi}{#2#1#3}
\crefrangeformat{enumi}{#3#1#4 to~#5#2#6}
\crefmultiformat{enumi}{#2#1#3}
{ and~#2#1#3}{, #2#1#3}{ and~#2#1#3}
\allowdisplaybreaks

\usepackage{ifthen}
\numberwithin{equation}{section}
\newtheorem{theorem}{Theorem}[section]
\newtheorem{proposition}[theorem]{Proposition}
\newtheorem{lemma}[theorem]{Lemma}

\crefname{claim}{Claim}{Claims}

\newtheorem{corollary}[theorem]{Corollary}

\newtheorem*{question*}{Question}

\theoremstyle{definition}
\newtheorem{definition}[theorem]{Definition}

\newtheorem*{definition*}{Definition}

\newtheorem*{fact*}{Fact}

\crefname{fact}{Fact}{Facts}

\theoremstyle{remark}

\renewcommand{\Pr}{\mathbb{P}}
\newcommand{\E}{\mathbb{E}}
\newcommand{\eps}{\varepsilon}

\newcommand{\Bin}{\operatorname{Bin}}

\newcommand{\dtv}{d_{\mathrm{TV}}}

\def\epsilon{\varepsilon}

\let\originalleft\left
\let\originalright\right
\renewcommand{\left}{\mathopen{}\mathclose\bgroup\originalleft}
\renewcommand{\right}{\aftergroup\egroup\originalright}

\title{Note on the trace of random walks on pseudorandom graphs}

\author{Yaobin Chen}\address{Shanghai Center for Mathematical Sciences,~Fudan University,~Shanghai,~200438,~China.}\email{ybchen21@m.fudan.edu.cn}

\author{Yiting Wang}\address{Institute of Science and Technology Austria,~Klosterneuburg,~3400,~Austria.}\email{yiting.wang@ist.ac.at}

\thanks{Yiting Wang is supported by the European Research Council (ERC), via grant agreements ``RANDSTRUCT'' No.\ 101076777.}

\date{\today}

\begin{document}

\begin{abstract}
We study the graph-theoretic properties of the trace of random walks on pseudorandom graphs. We show that for any $\varepsilon>0$, there exists a constant $C$ such that the cover time of an $(n,d,\lambda)$-graph $G$ with $d/\lambda\ge C$ is at most $(1+\varepsilon)n\log n$, meaning the expected number of steps needed to reach all vertices at least once is  at most $(1+\eps)n\log n$ regardless of the starting vertex. Furthermore, we prove that with high probability, the trace of a random walk of length $(1+\varepsilon)n\log n$ on $G$ is Hamiltonian, regardless of the starting vertex. These results also hold for random $d$-regular graphs with sufficiently large $d$. These findings answer two questions proposed by Frieze, Krivelevich, Michaeli, and Peled [PLMS, 2018]. Notably, our results imply a bound on a stronger version of the cover time: with high probability, all vertices are covered after $(1+\varepsilon)n\log n$ steps, regardless of the starting vertex. Our proofs rely on the spectral properties of the adjacency matrix and the graph expansion. All results are asymptotically optimal.
\end{abstract}
\maketitle

\section{Introduction}

Let $G$ be a simple undirected connected graph. A \emph{simple random walk} on $G$ is a stochastic process where, starting from a given vertex, each step consists of moving to a neighbor chosen uniformly at random. The set of edges traversed by this walk is called the \emph{trace}. Random walks are fundamental objects in combinatorics, probability theory, and theoretical computer science. For a comprehensive treatment, we refer the reader to the textbook by Levin and Peres~\cite{Levin-Peres-book} or the survey by Lov\'{a}sz~\cite{Lovasz-survey}.

In this paper, we study the graph formed by the trace of a simple random walk on a finite graph. This line of research was initiated by Frieze, Krivelevich, Michaeli, and Peled in~\cite{Frieze-et-al}, who focused on the case where the base graph is random. Motivated by these results, in~\cite{Frieze-et-al}, they asked to extend the inquiry to deterministic graphs that exhibit random-like properties, known as pseudorandom graphs. A prominent class of pseudorandom graphs is that of spectral expanders, or $(n, d, \lambda)$-graphs. An $(n, d, \lambda)$-graph is defined as a $d$-regular graph on $n$ vertices where the second largest eigenvalue in absolute value is at most $\lambda$. The expander mixing lemma shows that $\lambda$ governs the edge distribution: a smaller $\lambda$ implies that the edge distribution of $G$ closely resembles that of the random graph $G(n, d/n)$. For a detailed introduction, we refer the reader to the survey by Krivelevich and Sudakov~\cite{Krivelevich-Sudakov-survey}.

We focus our attention on the case where $G$ is an $(n,d,\lambda)$-graph. Let $\Gamma$ denote the subgraph on the vertex set of $G$ induced by the trace of the random walk. A fundamental question concerns the structural properties of $\Gamma$, particularly its connectivity. This inquiry is intrinsically linked to the \emph{cover time}, $T(G)$, defined as the expected number of steps required to visit every vertex, maximized over all possible starting vertices. Equivalently, $T(G)$ corresponds to the expectation of the hitting time for the property that $\Gamma$ becomes connected. The cover time is a classical parameter in the study of random walks. A seminal result by Feige~\cite{Feige-lower} establishes a universal lower bound of $(1-o(1))n\log n$ for any connected graph on $n$ vertices. For Erd\H{o}s-R\'{e}nyi graph, random regular graph and more generally random graph with a fixed degree sequence, asymptotically sharp bounds have been proven~\cite{Adullah-Copper-Frieze-degree-sequence,Cooper-Frieze-sparse-random, Cooper-Frieze-random-regular,Jonasson, Cooper-Frieze-Lubetzky}. In particular, Frieze, Krivelevich, Michaeli, and Peled~\cite{Frieze-et-al} demonstrated that for random graphs $G(n,p)$ and $\eps>0$, there exists a constant $C_{\eps}$ such that if $p \ge C_{\eps}\log n/n$, the trace $\Gamma$ generated by a random walk of length $(1+\varepsilon)n\log n$ is not merely connected, but Hamiltonian with high probability.

Given that $(n,d,\lambda)$-graphs exhibit quasi-random behavior analogous to the random graph $G(n, d/n)$, it is natural to inquire whether the structural properties of the random walk trace keep in this setting. Specifically, we ask whether a random walk of length $(1+\varepsilon)n\log n$ yields a Hamiltonian trace on an $(n,d,\lambda)$-graph. This question was raised by Frieze, Krivelevich, Michaeli, and Peled~\cite{Frieze-et-al}.  
Recall that the connectivity threshold for $G(n,p)$ is $(1+ o(1))\log n/n$ where as for the trace to be Hamiltonian, one needs $C_\eps\log /n$ where $C_\eps$ is a big constant depending on $\eps$. For $(n,d,\lambda)$-graph, the connectivity holds when $d/\lambda > 1$. Thus, analogously, one would expect the answer holds when $d/\lambda > C_\eps$ for some big constant $C_\eps$.

\subsection{Our results}
By the expander mixing lemma, $(n,d,\lambda)$-graphs represent a class of graphs with expansion properties. The asymptotic behavior of random walks in this setting was investigated in the foundational work of Broder and Karlin~\cite{Broder-Kralin}, who established that for any $d$-regular expander $G$, the cover time satisfies $T(G) = \Theta(n\log n)$ \footnote{The notion of expander in~\cite{Broder-Kralin} is different, but it applies to the $(n,d,\lambda)$ setting.}. Our first contribution is a refinement of this bound for $(n,d,\lambda)$-graphs. We demonstrate that, provided the spectral ratio is sufficiently large, the cover time is asymptotically optimal: $T(G) = (1+o(1))n\log n$.
\begin{proposition}\label{prop:cover-time}
    For every $\varepsilon >0$, there exists $C = C(\eps)$ such that if $G$ be an $(n,d,\lambda)$-graph with $d/\lambda \geq C$, then $T(G) \leq (1+\eps) n\log n$.
\end{proposition}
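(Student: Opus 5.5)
The plan is to bound the expected cover time by the standard union-bound argument over vertices, using that on a good spectral expander the probability that a fixed vertex is avoided for $t$ steps decays like $e^{-(1-\delta)t/n}$. Fix $\delta=\delta(\eps)>0$ small and a burn-in length $s=K\log n$ with $K=K(\delta,C)$. Write $P=A/d$ for the transition matrix. Its nontrivial eigenvalues are at most $\lambda/d\le 1/C$ in absolute value. Hence from any start the distribution after $s$ steps is within $n^{-10}$ of uniform in $\ell_\infty$, since $\|P^s e_u-\mathbf 1/n\|_2\le C^{-s}$. Note that for this to be a mixing statement we need $C>1$; in fact we will take $C$ large.

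The key step is the following claim: for any vertex $v$ and any start $u$,
\[
\Pr_u(\tau_v>t)\le \exp\bigl(-(1-\delta)t/n\bigr)+ n^{-5} \quad\text{for } t\ge s .
\]
I would prove it with the Dirichlet-form / killed-chain argument. Let $Q$ be $P$ restricted to $V\setminus\{v\}$, i.e. the walk killed on hitting $v$. Then $\Pr_\pi(\tau_v>t)\le \lambda_{\max}(Q)^t$, where $\pi$ denotes the uniform distribution. By the spectral gap bound $1-\lambda_{\max}(Q)\ge \pi(v)\cdot\mathrm{gap}/(1+o(1))$, and more precisely by the variational formula with test vectors orthogonal-decomposed against $\mathbf 1$, one gets $1-\lambda_{\max}(Q)\ge (1-O(1/C))/n$. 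Concretely, for $f$ vanishing at $v$ with $\|f\|_2=1$, write $f=\alpha\mathbf 1/\sqrt n+g$ with $g\perp \mathbf 1$. Then $f(v)=0$ forces $|g(v)|=\alpha/\sqrt n$, and $\langle f,Pf\rangle\le \alpha^2+\|g\|^2/C$. The constraint $\|g\|_2^2\ge g(v)^2=\alpha^2/n$ then gives $\langle f,Pf\rangle\le 1-(1-1/C)\alpha^2/n\cdot(\ldots)$. Optimising this gives $\lambda_{\max}(Q)\le 1-(1-O(1/C))/n$. The worst case is $\alpha^2\approx 1-1/n$, and this computation is the main technical step I expect to need care. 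Combining it with the burn-in, where starting from $u$ we are at near-uniform distribution after $s$ steps, yields the claim with $\delta=O(1/C)$.

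Finally, sum over vertices. For $t_0=(1+\delta)^2 n\log n$ we have
\[
\E_u[\tau_{\mathrm{cov}}]\le t_0+\sum_{t\ge t_0}\Pr_u(\tau_{\mathrm{cov}}>t)\le t_0+\sum_{t\ge t_0}\min\Bigl(1,\,n e^{-(1-\delta)t/n}+n^{-4}\Bigr).
\]
The exponential part contributes $O(n)$. The additive $n^{-4}$ error is only valid for $t$ up to polynomial size. To handle it, I would restart in blocks: split time into blocks of length $t_0$. Within each block the walk fails to cover with probability at most $n^{-\Omega(1)}$, uniformly in the starting vertex by the Markov property. Hence $\E[\tau_{\mathrm{cov}}]\le t_0(1+O(n^{-c}))+O(n)$, which is at most $(1+\eps)n\log n$ once $\delta$ is small, i.e. once $C$ is large.
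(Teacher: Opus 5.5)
Your proof is correct, but it takes a genuinely different route from the paper.

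\textbf{The paper's route.} It never estimates tail probabilities. It sandwiches every effective resistance, $2/(d+1)\le R_{u,v}\le 2/(d-\lambda)$, with the upper bound coming from the spectral expansion of $L^\dagger$. It feeds this into Tetali's formula (\Cref{lem:Tetali}) to get $(1-0.1\eps)n\le H_{u,v}\le (1+0.1\eps)n$ for all pairs. It then applies Matthews' bound (\Cref{lem:Matthews}), $C_G\le \max_{u,v}H_{u,v}\sum_{i\le n}1/i$.

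\textbf{Your route.} You prove an exponential tail for each hitting time via the Rayleigh quotient of the killed chain, then take a union bound over vertices and restart in blocks. The clean form of your key step is as follows. For $f(v)=0$, $\|f\|_2=1$, you have $\langle f,Pf\rangle\le \alpha^2+\|g\|^2/C=1-(1-1/C)\|g\|^2$. The constraint $\|g\|^2=1-\alpha^2\ge g(v)^2=\alpha^2/n$ forces $\|g\|^2\ge 1/(n+1)$. Hence $\lambda_{\max}(Q)\le 1-(1-1/C)/(n+1)$.

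\textbf{Two small repairs.} First, the burn-in actually yields $\Pr_u(\tau_v>t)\le \exp(-(1-1/C)(t-s)/(n+1))+n^{-5}$. This has your stated form $\exp(-(1-\delta)t/n)+n^{-5}$ only once $t\gg s$, not for all $t\ge s$. This is harmless, since you only use $t\ge t_0$. Second, the block-restart argument alone already finishes the proof: $\E_u[\tau_{\mathrm{cov}}]\le t_0/(1-p)$ with $p=\max_w \Pr_w(\tau_{\mathrm{cov}}>t_0)=n^{-\Omega(1)}$. So the summation over $t\ge t_0$ is unnecessary.

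\textbf{What each approach buys.} The paper's argument is very short given the classical black boxes. It also produces two-sided hitting-time estimates $H_{u,v}=(1\pm 0.1\eps)n$ at no extra cost. Yours uses nothing beyond the spectral gap, and it proves strictly more. The bound $\Pr_u(\tau_{\mathrm{cov}}>(1+\delta)^2 n\log n)\le n^{-\Omega(1)}$, uniform in the start $u$, is exactly the strong cover time statement \Cref{thm:main-strong-cover}. Matthews' bound only controls the expectation, so the paper has to obtain that statement separately from the heavier visit-count lemma (\Cref{lem:number-of-vists}).
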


We now introduce a strengthrening of the cover time, termed the \emph{strong cover time}. Let $ST(G)$ denote the minimum integer $t$ such that a simple random walk of length $t$ visits every vertex of $G$ with high probability (abbrviated as \textbf{whp} throughout the paper), uniformly over all starting vertices. Note that by definition, $ST(G)$ provides a high-probability guarantee, whereas $T(G)$ captures only the expectation. 
Deterministically $ST(G)\geq T(G)$.
In the context of random graphs $G(n,p)$, Jonasson~\cite{Jonasson} established that $ST(G)= (1+o(1))n\log n$ provided that $p = \omega(\log^3n/n)$. This result was subsequently refined by Frieze, Krivelevich, Michaeli, and Peled~\cite{Frieze-et-al}, who extended the bound to the asymptotically optimal $p = \omega(\log n/n)$. We consider the problem analogous to the $(n,d,\lambda)$-graphs. 
\begin{theorem}\label{thm:main-strong-cover}
    Let $\eps > 0$. There exists a constant $C = C(\eps)>0$ such that if $G$ is an $(n,d,\lambda)$-graph with $d/\lambda \geq C$, then $ST(G) \leq (1+\eps)n\log n$.
\end{theorem}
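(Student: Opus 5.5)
We plan to show that for a fixed vertex $v$ and any starting vertex $u$, the probability that a walk of length $t=(1+\eps)n\log n$ started at $u$ avoids $v$ is $o(1/n)$. A union bound over $v$ then gives that whp every vertex is visited, uniformly in $u$, which is exactly $ST(G)\le(1+\eps)n\log n$. Throughout, $P=A/d$ is the transition matrix, and all nontrivial eigenvalues of $P$ have absolute value at most $\lambda/d\le 1/C$.

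\textbf{Step 1: mixing.} Since $\lambda/d\le 1/C$, after $s_0=O(\log n/\log C)$ steps the distribution $P^{s_0}(u,\cdot)$ is within $n^{-3}$ of uniform in total variation, and in fact pointwise within $n^{-3}$. Choosing $C$ large, $s_0\le \eps n\log n/10$. So up to an additive $n^{-3}$ error we may take the walk to start from the uniform distribution. We then split the remaining $(1+\eps')n\log n$ steps into consecutive blocks of length $L=\lceil \log n\rceil$ (or some slowly growing length), separated by gaps of length $s_0$. This loses only a factor $1+o(1)$ in length provided we make $L$ larger than $s_0$, say $L=K s_0$ with $K=K(\eps)$ large.

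\textbf{Step 2: avoidance probability per block.} From the stationary distribution, the probability of hitting $v$ within a block of length $L$ is at least $\E[N]^2/\E[N^2]$, where $N$ is the number of visits to $v$ in the block. Here $\E[N]=L/n$. We bound
\[
\E[N^2]\le \frac{L}{n}\sum_{k=0}^{L}P^k(v,v),
\]
where
\[
\sum_{k\ge 0}P^k(v,v)\le 1+\sum_{k\ge1}\bigl(1/n+(\lambda/d)^k\bigr)\le 1+\frac{2}{C}+\frac{L}{n}.
\]
This uses the spectral bound $P^k(v,v)\le 1/n+(\lambda/d)^k$, valid since $\sum_i \phi_i(v)^2=1$. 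Hence the hitting probability per block is at least $(L/n)(1-O(1/C))$.

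\textbf{Step 3: combining the blocks.} Conditioning on the past, each block starts within $n^{-3}$ of uniform thanks to the gap, so by the Markov property the blocks are nearly independent. There are $m\approx(1+\eps')n\log n/(L+s_0)$ blocks, so
\[
\Pr(v\text{ avoided})\le \bigl(1-(1-O(1/C))L/n+n^{-3}\bigr)^{m}\le \exp\bigl(-(1+\eps')(1-O(1/C))(1-1/K)\log n\bigr)=o(1/n),
\]
once $C$ and $K$ are large relative to $\eps$.

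\textbf{Main obstacle.} The delicate point is the return-probability sum in Step 2. The crude bound $P^k(v,v)\le(\lambda/d)^k$ already gives $\sum_{k\ge1}\le 2/C$ for nonbacktracking-insensitive walks; $P^2(v,v)=1/d$ is small, so this is fine. What actually needs care is making the block and gap lengths compatible: $s_0$ must be $o(L)$ while $mL/n$ stays at $(1+\eps')\log n$. We handle this by taking $C$ large so that $s_0\approx 3\log n/\log C$ is a small fraction of $L$, choosing $L=\log n$ exactly. We also need the pointwise mixing error, $n\cdot(\lambda/d)^{s_0}$, to be at most $n^{-3}$.
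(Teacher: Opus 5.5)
Your proposal is correct and follows essentially the same route as the paper, which derives the theorem from Lemma~\ref{lem:number-of-vists}: mixing gaps to restart near stationarity, a Paley--Zygmund lower bound on the per-block hitting probability via the spectral bound $P^k(v,v)\le 1/n+(\lambda/d)^k$, a product over blocks by the Markov property, and a union bound over $v$. The differences are in parameters and in one harmless slip. You take blocks of length $\Theta(\log n)$ with gaps $O(\log n/\log C)$ and bound the avoidance probability directly, while the paper takes blocks of length $n/\sqrt{C}$ with gaps $10\log n$ and uses a binomial tail to get $\rho\log n$ visits. Your bound on $\E[N^2]$ drops a factor $2$ on the $k\ge 1$ terms, which does not matter since that sum is $O(1/C)+o(1)$.
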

Again this is asymptotically optimal by the general lower bound of the cover time. In fact, we obtain~\Cref{thm:main-strong-cover} as a direct consequence of a stronger result. 

The connection between spectral expansion and the Hamiltonicity in pseudorandom graphs is well-established as in~\cite{Kriveham,glockham,ferber2024hamiltonicity,chen2025robustness}. A recent breakthrough by Draganić, Montgomery, Munhá Correia, Pokrovskiy and Sudakov~\cite{Draganic-et-al} demonstrates that any $(n,d,\lambda)$-graph admits a Hamiltonian cycle, if the spectral ratio $d/\lambda\ge C$ for a sufficiently large constant $C$. Drawing a parallel to the trace of random walk landscape, Frieze, Krivelevich, Michaeli, and Peled~\cite{Frieze-et-al} raised the question of whether the trace of a random walk on $(n,d,\lambda)$-graphs and random regular graphs exhibits similar Hamiltonicity properties. 
We answer these questions through the following two main results.

\begin{theorem}\label{thm:main}
    Let $\varepsilon > 0$. There exists a constant $C = C(\eps) >0$ such that the following holds: Let $G$ be an $(n,d,\lambda)$-graph with $d/\lambda \geq C$ and $L = (1+\eps) n\log n$. \textbf{whp} for every vertex $v\in V(G)$, the trace of a simple random walk starting at $v$ of length $L$ is Hamiltonian.
\end{theorem}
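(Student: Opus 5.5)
The plan is to prove \Cref{thm:main} by showing that the trace $\Gamma$ of a walk of length $L$ \textbf{whp} satisfies a sufficient condition for Hamiltonicity stated through expansion. Concretely, $\Gamma$ should contain a spanning subgraph that (i) has minimum degree at least $2$, and in fact every vertex has at least $k$ distinct trace-neighbours for a suitable constant $k=k(\eps)$; (ii) small sets expand, namely $|N_\Gamma(S)|\ge k|S|$ for $|S|\le n/(2k)$; and (iii) any two disjoint sets of size $n/(2k)$ are joined by an edge of $\Gamma$. By the standard P\'osa rotation--extension machinery (a $k$-expander that is ``connected between large sets'' is Hamiltonian, e.g.\ in the form used by Hefetz--Krivelevich--Szab\'o or Draganić et al.~\cite{Draganic-et-al}), these properties give Hamiltonicity. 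The uniformity over all starting vertices is handled at the end by a union bound over the $n$ starting vertices, so every failure probability must be $o(1/n)$.

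The main probabilistic input is the spectral gap. Since $d/\lambda\ge C$, the walk mixes in $O(\log n/\log(d/\lambda))$ steps, which is much smaller than $\eps n\log n$. I will split the walk into an initial segment of length $\eps n\log n/2$, used for the ``bulk'' properties, and a main segment of length $(1+\eps/2)n\log n$, used for covering vertices enough times.

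\emph{Bulk properties.} Property (iii) will come from the expander mixing lemma: between any two sets $A,B$ of size $\alpha n$ there are $\Omega(\alpha^2 dn)$ edges of $G$. Cut the walk into blocks separated by mixing gaps. For each block, the walk traverses some edge of $e_G(A,B)$ with probability bounded below, and the blocks are nearly independent. Hence the chance that no $A$--$B$ edge is ever used is $\exp(-\Omega(n\log n))$, which beats the $4^n$ union bound over pairs of sets. The same block argument, combined with Chernoff-type bounds for Markov chains with a spectral gap (Gillman / Lezaud), handles (ii) for medium-sized $S$: a set of size $s\ge \log n$ is visited $\Theta(s\log n)$ times, and the exits from $S$ land on $\Omega(k s)$ distinct neighbours, because $G$ has few edges inside small sets ($e(S)\le (d/n)s^2/2+\lambda s$).

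\emph{Degree of vertices.} For (i), I need every vertex to be visited at least $k$ times, with the departing steps choosing $k$ distinct neighbours. With $(1+\eps/2)n\log n$ steps after mixing, the number of visits to a fixed $v$ is approximately Poisson with mean $(1+\eps/2)\log n$. I would prove this by a first-visit / renewal analysis in the style of \Cref{prop:cover-time}: the return probability to $v$ within the mixing window is $O(1/d+\lambda/d)$, which is small. This gives $\Pr(\text{fewer than }k\text{ visits})\le (\log n)^{k}n^{-1-\eps/2+o(1)}$, and a union bound over $v$ and the $n$ starting points then needs an extra $n^{-\eps/4}$ margin. To get that margin, I will ask only that the number of vertices with few trace-neighbours is small rather than zero. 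A small bad set is then handled separately: each bad vertex is covered \textbf{whp} (\Cref{thm:main-strong-cover}) and therefore has trace-degree at least $2$ unless it is visited only at the very end; bad vertices are far apart in $\Gamma$; and their neighbours are good. This is the usual treatment of ``SMALL'' vertices in random-graph Hamiltonicity proofs.

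\emph{Main obstacle.} I expect the hardest step to be this low-degree analysis, and in particular making the union bound over starting vertices work with failure probability $o(1/n)$. The probability that some vertex has trace-degree exactly $1$ or $2$ is only polynomially small, with exponent governed by $\eps$. I would therefore aim to show that \textbf{whp} bad vertices form an $\Omega(\log n/\log\log n)$-separated set of size $n^{1-\eps/3}$. The rotation argument then routes the Hamilton cycle through each bad vertex using two of its trace-edges, whose other endpoints are good vertices, and uses expansion among the good vertices for everything else. The endpoint vertex of the walk, whose trace-degree may be $1$, needs separate care: the final segment should ensure it is previously visited \textbf{whp}.
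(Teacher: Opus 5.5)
There is a genuine gap, and it starts with a misreading of the statement. ``\textbf{whp} for every vertex $v$'' means that the probability that $\Gamma_v$ is Hamiltonian tends to $1$ uniformly in the starting vertex $v$. Walks from different starting points are different experiments, so there is nothing to union-bound over and no need for a per-start failure probability of $o(1/n)$.

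Worse, the stronger target you set is false. From any start, the expected number of vertices missed in $(1+\eps)n\log n$ steps is roughly $n\cdot n^{-(1+\eps)}=n^{-\eps}$. The walk therefore misses some vertex with probability of order $n^{-\Theta(\eps)}$, which is not $o(1/n)$ for small $\eps$, and a missed vertex already rules out Hamiltonicity.

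Your bad-vertex scheme does not escape this, since it appeals to coverage \textbf{whp}, whose failure probability is again $n^{-\Theta(\eps)}$. It also breaks your route to Hamiltonicity. A graph with vertices of trace-degree $2$ is not a $k$-expander, so \Cref{thm:Hamiltonicity} cannot be used as a black box. ``Routing the cycle through each bad vertex by rotations'' would amount to proving a new Hamiltonicity theorem for constant-factor expanders with exceptional vertices, which is not a routine step.

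Once the union bound over starts is dropped, your ``main obstacle'' disappears. The plan still lacks the ingredient that drives the paper's proof, \Cref{lem:number-of-vists}: \textbf{whp} every vertex is visited at least $\rho\log n$ times, not merely $k$ times.

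A threshold of $k$ visits is the wrong input, because expansion of small sets must come from the randomness of many exits per vertex. Your plan never treats $2\le|S|<\log n$. Minimum degree cannot give $|N_\Gamma(S)|\ge k|S|$ there, since the expander mixing lemma only bounds $e_G(U)$ by $d|U|^2/(2n)+\lambda|U|/2$, and $\lambda$ may far exceed $\log n$, so $G$ can contain small dense pieces. For $|S|\ge\log n$, the Gillman/Lezaud inequalities lose constant factors in the exponent. They do not give the lower-tail bound $\exp(-(1+\Omega(1))|S|\log n)$ needed against $\binom{n}{|S|}$ when $|S|$ is near $\log n$.

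The paper gets the visit bound from Paley--Zygmund on blocks of length $n/\sqrt C$, separated by $O(\log n)$ mixing gaps (\Cref{lem:lower-bound-return}). This yields a binomial whose lower tail at $\rho\log n$ is $o(n^{-1})$; your renewal/Poisson estimate, pushed from threshold $k$ to $\rho\log n$, would serve equally well.

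Every vertex then has $\rho\log n$ independent uniform exits. Take $|A|=a$ and a putative neighbourhood $B$ with $|B|\le C'a$, and let $d_v=|N_G(v)\cap(A\cup B)|$. The probability that all exits from $A$ stay in $A\cup B$ is at most $\prod_{v\in A}(d_v/d)^{\rho\log n}$. The expander mixing lemma and AM--GM bound this by $(O(\sqrt{C'}/C))^{\rho a\log n}$ when $a<10n/(C\sqrt{C'})$, and by $e^{-\Omega(n\log n)}$ otherwise. Either way this beats the number of choices of $(A,B)$ uniformly in $a\ge1$; the case $a=1$ is minimum degree $C'$. Joinedness is handled the same way. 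So $\Gamma$ is a $\log\log C$-expander with no exceptional vertices at all (\Cref{lem:C-expander}), and \Cref{thm:Hamiltonicity} applies directly.
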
 

We remark that implicitly in~\Cref{prop:cover-time},~\Cref{thm:main-strong-cover} and~\Cref{thm:main}, $d$ is sufficiently large. This follows from the fact that $\lambda = \Omega(d)$ (see~\cite{Krivelevich-Sudakov-survey}), which means $1/C \geq \lambda/d\geq \Omega(1/\sqrt{d})$ and therefore $d$ is sufficiently large as long as $C$ is.

It is well known that a random $d$-regular graph for any $d\geq 3$ is \textbf{whp} an $(n,d,\lambda)$-graph for $\lambda = O(\sqrt{d})$~\cite{Friedman}. We thus obtain the following corollary.
\begin{corollary}\label{cor:random-d-regular}
     Let $\varepsilon > 0$. There exists $d_0 = d_0(\eps)$ that the following holds: Let $G$ be a random $d$-regular graph with $d\geq d_0$ and let $L = (1+\eps) n\log n$. \textbf{whp} $G$ satisfies that \textbf{whp} for every vertex $v\in V(G)$, the trace of a simple random walk starting at $v$ of length $L$ is Hamiltonian.
\end{corollary}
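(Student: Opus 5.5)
The corollary follows from \Cref{thm:main} combined with Friedman's theorem~\cite{Friedman}; the plan is to match the constants and handle the two layers of ``\textbf{whp}''. Fix $\eps>0$ and let $C=C(\eps)$ be the constant supplied by \Cref{thm:main}. By~\cite{Friedman}, for every fixed $d\ge 3$ a random $d$-regular graph $G$ on $n$ vertices is \textbf{whp} an $(n,d,\lambda)$-graph with $\lambda\le 2\sqrt{d-1}+o(1)\le 3\sqrt d$, say. More generally, for $d$ growing with $n$ we use the known bound $\lambda=O(\sqrt d)$ \textbf{whp} for all $d$ in the relevant range, which is the result of Broder--Frieze--Suen--Upfal, Cook--Goldstein--Johnson and Tikhomirov--Youssef, covering $d$ up to $n/2$. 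The cases $d>n/2$ reduce to the complement, which is again regular with the same nontrivial spectrum shifted by $-1$. So there is an absolute constant $K$ with $\lambda\le K\sqrt d$ \textbf{whp}.

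Choose $d_0=d_0(\eps)$ with $\sqrt{d_0}\ge KC$. Then for every $d\ge d_0$,
\[
\frac{d}{\lambda}\ge \frac{d}{K\sqrt d}=\frac{\sqrt d}{K}\ge C
\]
on the event $\mathcal{E}_n$ that $G$ satisfies the spectral bound, and $\Pr(\mathcal{E}_n)\to 1$.

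On $\mathcal{E}_n$, $G$ is a deterministic $(n,d,\lambda)$-graph with $d/\lambda\ge C$. \Cref{thm:main}, applied conditionally on $G$, then says that the probability over the walk that some starting vertex yields a non-Hamiltonian trace of length $L$ is at most some $\delta_n\to 0$. We must check that $\delta_n$ depends only on $n$ (and $\eps$), not on the particular graph. This holds provided the proof of \Cref{thm:main} yields an error bound uniform over all $(n,d,\lambda)$-graphs with $d/\lambda\ge C$, which is how ``\textbf{whp}'' in a statement quantified over all such $G$ must be read. Hence, with probability $\Pr(\mathcal{E}_n)=1-o(1)$ over $G$, the graph has the property that \textbf{whp} over the walk the trace from every vertex is Hamiltonian. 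This is exactly the nested statement of the corollary.

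The only real obstacle is the spectral input. For fixed $d$ Friedman's theorem suffices, but the corollary allows $d$ to depend on $n$, so we must invoke the spectral gap results for growing degrees, with a uniform constant $K$, to cover the whole range $d\ge d_0$. Everything else is bookkeeping of constants.
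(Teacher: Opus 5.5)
Your proposal is correct and follows the paper's argument. The paper simply cites the bound $\lambda=O(\sqrt d)$ for random $d$-regular graphs (Friedman), so that $d/\lambda=\Omega(\sqrt d)\ge C$ for $d\ge d_0$, and then applies \Cref{thm:main}. Your extra steps fill in points the paper leaves implicit: invoking the growing-degree spectral results (Friedman's theorem alone only covers fixed $d$), passing to complements for $d>n/2$, and noting that the inner \textbf{whp} of \Cref{thm:main} is uniform over all $(n,d,\lambda)$-graphs with $d/\lambda\ge C$; one small correction is that for complements the nontrivial eigenvalues become $-1-\mu$ rather than a plain shift, which does not affect the absolute-value bound.
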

We remark that both results are asymptotically optimal due to the general lower bound on the cover time. Our proof strategy is as follows.
First, since the second largest eigenvalue of the adjacency matrix of an $(n,d,\lambda)$-graph is bounded away from $d$, we can control the transitional probability going from any vertex $u$ to another vertex $v$ at step $t$ by using the spectral decomposition of the transition matrix.
We show that after $(1+\varepsilon)n\log n$ steps, each vertex is visited $\Theta(\log n)$ times with high probability. Then, we use these visit counts to establish the expansion properties of the trace graph $\Gamma$, which ensures the existence of a Hamiltonian cycle. 

Central to our proof of \Cref{thm:main} is the lemma that the trace of the random walk visits every vertex with logarithmic frequency. Specifically, \Cref{lem:number-of-vists} establishes that there exists a constant $\rho = \rho(\varepsilon) > 0$ such that with high probability, every vertex is visited at least $\rho \log n$ times. This phenomenon is intimately connected to the concept of \emph{blanket time} in the probability theory, a parameter introduced by Winkler and Zuckerman~\cite{Winkler-Zuckerman}. Let $\pi$ denote the stationary distribution, where $\pi_v = \deg(v)/(2|E(G)|)$. For a parameter $\delta \in (0,1)$, the $\delta$-blanket time, denoted $t_{bl}(G,\delta)$, is defined as the minimum time $t$ required for the walk to visit every vertex $v$ at least $\delta t \pi_v$ times. While it is immediate that $t_{bl}(G,\delta) \ge T(G)$, Winkler and Zuckerman conjectured that the blanket time is, in fact, within a constant factor of the cover time; that is, $t_{bl}(G,\delta) \le C_\delta T(G)$ for any $0<\delta < 1$ and any graph $G$. 
Kahn, Kim, Lov{\'a}sz and Vu~\cite{Kahn-Kim-Lovasz-Vu} showed that $t_{bl}(G,\delta)= O(T(G)(\log\log n)^2)$ before the conjecture was affirmatively settled in a celebrated work of Ding, Lee, and Peres~\cite{Ding-Lee-Peres}. 
Our results imply that there exists a $\delta = \delta(\eps)\in (0,1)$ for which $t_{bl}(G,\delta)\leq (1+\eps)n\log n$, which is the same as the cover time up to a $1+o(1)$ factor. In general, it is unclear for which graphs the same conclusion hold. In the concluding remarks we discuss and propose further questions along this direction.

\subsection{Paper organization} 
\Cref{sec:preliminaries} contains the background of random walks and tools we use.~\Cref{sec:proof} contains the proofs for all results and~\Cref{sec:concluding-remarks} some remarks and open problems.

\subsection*{Acknowledgement}
Part of this work was initiated during the visit of second author to Shanghai Center for Mathematical Sciences (SCMS) and Fudan University. The authors would like to thank Prof.\ Hehui Wu and his group memebers for their support and hospitality.

\section{Preliminaries}\label{sec:preliminaries}

\subsection{Notation}
For a graph $G$, we let $e(G)$ denote the number of its edges. For any two disjoint sets of vertices $S,T\subseteq V(G)$, we write $e_{G}(S,T)$ for the number of edges between $S$ and $T$ in $G$. For a set of vertices $S$, we denote by $S^c = V(G)\setminus S$.
We denote the maximum degree of a graph $G$ by $\Delta(G)$ and the neighborhood by $N(S) = \{v\in V(G)\setminus S: N(v)\cap S \neq \emptyset\}$. We write the degree of a vertex $v$ as $\deg(v)$.

For a simple random walk on graph $G$ starting from vertex $v$, let $\Gamma_v$ denote the \emph{trace graph}, which is a simple graph whose vertex set is $V(G)$ and edge set is the number of edges traversed by the random walk. When the starting vertex $v$ is irrelevant, we write $\Gamma$ instead.

All logarithms by default are base $e$. Floor and ceiling signs are omitted when there is no confusion.

\subsection{Basics of random walks}
Let $G$ be a connected simple graph. A simple random walk on $G$ starting at vertex $v_0\in V(G)$ is an infinite random sequence $(v_0,v_1,\dots)$, where at each step $i$, the walk move from $v_i$ to $v_{i+1}$, which is a uniformly random chosen neighbor of $v_i$. We denote such move by $v_i\rightarrow v_{i+1}$. The simple random walk has the Markov property, meaning the probability distribution at each step depends only on the current position and not on the history. 

Let $P = (P_{i,j})_{i,j=1}^n$ be the probability transition matrix, where $P_{i,j} = \Pr(v_i\rightarrow v_j) = 1/\deg(v_i)$ if $\{i,j\}\in E(G)$ and $0$ otherwise. Equivalently $P = D^{-1}A$, where $D$ is the diagonal matrix with $D_{i,i} = \deg(v_i)$ and $A$ is the adjacency matrix.
Note that the probability a random walk takes $t$ steps to reach vertex $v$ from vertex $u$ equals to $(P^t)_{u,v}$.
The \emph{stationary distribution} for a random walk $X$ on $G$ is a vector $\pi \in [0,1]^n$ such that $P\pi = \pi$. It is known that if $G$ is connected and non-bipartite, then $\pi$ exists and is unique (see e.g. ~\cite{Lovasz-survey}).
For $S\subseteq V(G)$, let $\pi(S) = \sum_{v\in S}\pi_v$. 
The \emph{total variation distance} between $X$ and $\pi$ is define as
\[
    \dtv(X_t, \pi) = \sup_{S\subseteq V(G)}|\Pr(X_t\in S) - \pi_S| = \frac{1}{2} \sum_{v\in V(G)}|\Pr(X_t = v) - \pi_v|\,.
\]
Let $Y$ be the stationary walk which is a simple random walk such that $\Pr(Y_0 = v) = \pi_v$ for every $v\in V(G)$.
There exists a standard coupling between $X$ and $Y$ such that for every $t$, we have
\begin{equation}\label{eqn:coupling}
     \Pr(\exists s\geq t: X_s\neq Y_s) \leq \dtv(X_t,\pi)\,.
\end{equation}

Now we define several well-known parameters of the random walk.
For a vertex $v\in V(G)$, let $C_v$ denote the expected number of steps for a simple random walk staring from $v$ to reach every vertex at least once. 
The \emph{cover time} is defined as $C_G = \max_{v\in V(G)}C_v$. 
The \emph{hitting time} $H(u,v)$ is defined as the expected number of steps for a random walk to reach $v$ from $u$. 
For $\xi\in (0,1)$, the \emph{$\xi$-mixing time} is defined as 
\[
    \tau(\xi) = \min\{t\geq 0: \dtv(X_s,\pi) <\xi \;\;\forall s\geq t\}\,.
\]
It is well-known that for expanders, the mixing time is small. For $(n,d,\lambda)$-graphs, a quantitative version is as follows (see for instance~\cite[Chapter 12]{Levin-Peres-book})
\begin{theorem}\label{thm:quick-mixing}
    Let $G$ be an $(n,d,\lambda)$-graph with $\lambda < d$. Then, 
    \[
         \tau(\xi) \leq \frac{\log n/2 + \log (1/2\xi)}{1-\lambda/d}\,.
    \]
    Assuming $d/\lambda \geq 100$, we have
    \[
     \tau(1/n) \leq 10 \log n\,.
    \]
\end{theorem}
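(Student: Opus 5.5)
The plan is the standard spectral argument, with no probabilistic input. Since $G$ is $d$-regular, $P = A/d$ is symmetric and $\pi$ is uniform, $\pi_v = 1/n$. Let $1=\mu_1 \geq \mu_2 \geq \dots \geq \mu_n$ be the eigenvalues of $P$, with an orthonormal eigenbasis $f_1,\dots,f_n$ where $f_1 = \mathbf{1}/\sqrt n$. The hypothesis gives $|\mu_i| \le \lambda/d$ for all $i \ge 2$. Writing $e_u$ for the indicator vector of $u$, we get
\[
(P^t)_{u,v} - \frac1n = \sum_{i\ge 2}\mu_i^t f_i(u) f_i(v).
\]

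First I bound this entrywise. By Cauchy--Schwarz and $\sum_{i\ge2} f_i(u)^2 \le \|e_u\|^2 = 1$, we have $|(P^t)_{u,v}-1/n| \le (\lambda/d)^t$. This is weaker than what we want, so instead I work with the whole distribution. Let $x_t = e_u^{\top}P^t - \pi$. Then $\|x_t\|_2 \le (\lambda/d)^t \|e_u - \pi\|_2 \le (\lambda/d)^t$. By Cauchy--Schwarz over the $n$ coordinates,
\[
2\,\dtv(X_t,\pi) = \|x_t\|_1 \le \sqrt n\,\|x_t\|_2 \le \sqrt n\,(\lambda/d)^t .
\]
The right-hand side is nonincreasing in $t$, so it is enough to find the first $t$ with $\tfrac12 \sqrt n (\lambda/d)^t < \xi$. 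That condition is $t \log(d/\lambda) > \tfrac12\log n + \log(1/(2\xi))$.

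Next I convert this into the stated form. For $0 < x = \lambda/d < 1$ we have $\log(1/x) \ge 1 - x$, so any
\[
t \ge \frac{\log n/2 + \log(1/2\xi)}{1-\lambda/d}
\]
works. This gives the first bound, with ceilings absorbed per the paper's convention; if needed one adds a $+1$, or uses strict inequality, since $\log(1/x) > 1-x$.

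For the second bound I take $\xi = 1/n$ and $d/\lambda \ge 100$. It is sharper to use $\log(d/\lambda)\ge \log 100 > 4$ directly, which gives $t > (\tfrac12\log n + \log(n/2))/4 \approx \tfrac38\log n$, well below $10\log n$. Even the cruder $1/(1-\lambda/d) \le 100/99$ gives about $1.52\log n$, still below $10\log n$. The nonnegativity requirement $\log(1/2\xi) \ge 0$ holds for $n \ge 2$.

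The only delicate point is the $\ell_1$ versus $\ell_2$ step. Using the entrywise bound and then summing would lose a factor of $n$ instead of $\sqrt n$, giving $\log n$ in place of $\tfrac12\log n$. So the main obstacle is simply getting this conversion right, and I would be careful to apply Cauchy--Schwarz to the whole vector $x_t$ and not to individual entries.
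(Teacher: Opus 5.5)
Your argument is correct and is the standard spectral $\ell^2$-to-$\ell^1$ argument behind the bound the paper cites from Levin--Peres, Chapter~12 (the paper gives no proof of its own). Your point that Cauchy--Schwarz must be applied to the whole vector $x_t$ is exactly what yields the $\tfrac12\log n$ in the statement, whereas the entrywise route gives $\log n$.

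One small correction to your rounding remark. The strict inequality $\log(1/x)>1-x$ handles the strict ``$<\xi$'' in the definition of $\tau$, but it does not handle rounding $t$ up to an integer. For example, for $K_4$ with $\xi=0.7$ the literal bound is about $0.54$ while $\tau=1$. So the $+1$ (equivalently, the paper's ceiling convention) is genuinely needed for the first inequality, though it is irrelevant for $\tau(1/n)\le 10\log n$.
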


\subsection{Electrical networks}
One may think of a (undirected, simple) graph as an electrical network where each edge carry $1$ Ohm of electrical resistance. The \emph{effective resistance} between two vertices $u,v$, denoted as $R_{u,v}$, is the amount of current flowing between $u$ and $v$ if we exert $+1$ volt of voltage at $u$ and $-1$ at $v$. 
Effective resistance is nonnegative.
One may compute the effective resistance by the series and parallel rules. Another, more systematic way, is to calculate effective resistance using the Laplacian matrix.

Recall the Laplacian matrix $L$ of a graph $G$ is defined to be $D-A$ where $D$ is the diagonal matrix with $D_{i,i} = \deg(v_i)$ and $A$ is the adjacency matrix and the pseudo-inverse of $L$, denoted as $L^\dagger$, is the inverse of $L$ restricted to the subspace orthogonal to the span of all-$1$ vector. Let $e_i$ be the $i$-th standard basis. Then, the effective resistance can be expressed as (see~\cite{Klein-Randic}):
\begin{equation}\label{eqn:Rij-Laplacian}
    R_{v_i, v_j} = (e_i - e_j)^T L^\dagger (e_i - e_j) = L^\dagger_{i,i} + L^\dagger_{j,j} - 2L^\dagger_{i,j}\,.
\end{equation}
A standard lower bound for $R_{u,v}$ is as follows (see e.g.~\cite{Jonasson}):
\begin{proposition}\label{prop:Ruv-lower-bound}
    For a graph $G$ and any two vertices $u,v\in V(G)$, we have 
    \[
        R_{u,v} \geq \frac{1}{d_u + 1} + \frac{1}{d_v+1}\,.
    \]
\end{proposition}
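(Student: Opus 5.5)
The plan is to prove this inequality by Rayleigh's monotonicity principle. We use the standard normalization: $R_{u,v}$ is the voltage difference needed to push one unit of current from $u$ to $v$, which is the quantity given by \eqref{eqn:Rij-Laplacian}. Recall that identifying (shorting) vertices of a network can only decrease effective resistances. Equivalently, by the Dirichlet principle,
\[
1/R_{u,v}=\min\Big\{\sum_{xy\in E(G)}(f(x)-f(y))^2 : f(u)=1,\ f(v)=0\Big\},
\]
and every test function gives an upper bound on the conductance $1/R_{u,v}$, hence a lower bound on $R_{u,v}$. We may assume $G$ is connected, since otherwise $R_{u,v}=\infty$.

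The test function will be $f(u)=1$, $f(v)=0$ and $f\equiv c$ on all other vertices. This is the same as shorting $V(G)\setminus\{u,v\}$ into a single vertex $w$, and loops at $w$ carry no current. The reduced network has $a$ parallel edges from $u$ to $w$ and $b$ parallel edges from $w$ to $v$. If $uv\in E(G)$, it also has one direct edge $uv$.

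\emph{Case $uv\notin E(G)$.} Here $a=d_u$ and $b=d_v$. The series-parallel rules give resistance $1/d_u+1/d_v$, which already exceeds $1/(d_u+1)+1/(d_v+1)$.

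\emph{Case $uv\in E(G)$.} Here $a=d_u-1$ and $b=d_v-1$. If $a+b=0$, the graph is a single edge, so $R_{u,v}=1=\tfrac12+\tfrac12$. Otherwise the $u$--$w$--$v$ path has conductance $ab/(a+b)$ and lies in parallel with the edge $uv$ of conductance $1$. So it suffices to show
\[
\frac{ab}{a+b}+1\le \frac{(a+2)(b+2)}{a+b+4},
\]
since the right-hand side is exactly $\bigl(1/(d_u+1)+1/(d_v+1)\bigr)^{-1}$.

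Cross-multiplying, the left side gives $ab(a+b)+4ab+(a+b)^2+4(a+b)$. The right side gives $ab(a+b)+2(a+b)^2+4(a+b)$. Their difference is $(a+b)^2-4ab=(a-b)^2\ge 0$, which proves the inequality. Taking reciprocals and applying Rayleigh monotonicity then gives the claim.

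The only real subtlety is bookkeeping in the adjacent case, where the edge $uv$ must not be counted among the edges to $w$. There is also the normalization issue: the paper's informal $\pm1$-volt description differs from the unit-current convention by a constant, and we use the Laplacian formula \eqref{eqn:Rij-Laplacian} as the definition.
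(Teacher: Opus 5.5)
Your proof is correct and complete. The paper gives no argument for this proposition; it only cites Jonasson, so there is no in-paper proof to compare against. Your argument is the standard way this bound is obtained: bound the conductance by the Dirichlet energy of the test function that is constant on $V(G)\setminus\{u,v\}$ (equivalently, short those vertices), evaluate by the series--parallel rules, and in the adjacent case use the identity $(a+2)(b+2)(a+b)-(ab+a+b)(a+b+4)=(a-b)^2$. The bound is sharp, which shows neither step loses anything in general: in $K_n$ one has $R_{u,v}=2/n=\frac{1}{d_u+1}+\frac{1}{d_v+1}$.

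Three small wording points, none affecting validity.
\begin{itemize}
\item ``Otherwise $R_{u,v}=\infty$'' is true only when $u$ and $v$ lie in different components. If they share a component of a disconnected $G$, restrict to that component; degrees and $R_{u,v}$ are unchanged. In fact your test-function bound never uses connectivity.
\item The paper's informal ``$\pm1$ volt'' description does not differ from the unit-current convention by a constant. It literally gives the current $2/R_{u,v}$, a multiple of the conductance rather than the resistance. Taking the Laplacian formula as the definition, as you do, is the right choice; it is also what Tetali's formula and the proof of the cover-time bound rely on.
\item The statement implicitly requires $u\neq v$, since $R_{u,u}=0$.
\end{itemize}
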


\subsection{Connection between parameters}
The effective resistance, hitting time and cover time are closely related.

Tetali~\cite{Tetali} showed the following connection between the effective resistance and the hitting time:
\begin{lemma}\label{lem:Tetali}
     For a graph $G$ and any two vertices $u,v\in V(G)$, we have 
     \[
        H_{u,v} = \frac{1}{2}\sum_{w\in V}\deg(w)\cdot (R_{u,v} - R_{u,w} + R_{v,w})\,.
     \]
\end{lemma}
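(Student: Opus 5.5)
The plan is to prove the identity by linear algebra, expressing both sides in terms of the pseudo-inverse $\Gamma := L^\dagger$ and using \eqref{eqn:Rij-Laplacian} for the resistances. Throughout we assume $G$ is connected (otherwise hitting times across components are infinite), write $m = e(G)$, and let $\mathbf{d}$ be the degree vector, so $\sum_w \deg(w) = 2m$.

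First I will set up the linear system for hitting times. Fix the target $v$ and let $h(x) = H_{x,v}$. First-step analysis gives $h(v) = 0$ and, for $x \neq v$,
\[
h(x) = 1 + \frac{1}{\deg(x)}\sum_{y \sim x} h(y),
\]
that is, $(Lh)(x) = \deg(x)$. Since $\mathbf{1}^T L = 0$, the coordinates of $Lh$ sum to zero. This forces $(Lh)(v) = \deg(v) - 2m$, so
\[
Lh = \mathbf{d} - 2m\, e_v .
\]
The right-hand side is orthogonal to $\mathbf{1}$. Because $\ker L = \mathrm{span}(\mathbf{1})$ for connected $G$, every solution has the form $h = \Gamma(\mathbf{d} - 2m e_v) + c\mathbf{1}$. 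The constant is fixed by $h(v) = 0$, but we do not need its value:
\[
H_{u,v} = h(u) - h(v) = (e_u - e_v)^T \Gamma (\mathbf{d} - 2m e_v) = (e_u-e_v)^T\Gamma\mathbf{d} - 2m(\Gamma_{uv} - \Gamma_{vv}).
\]

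Next I will expand the right-hand side of the lemma. By \eqref{eqn:Rij-Laplacian} and the symmetry of $\Gamma$, the diagonal terms $\Gamma_{uu}$ and $\Gamma_{ww}$ cancel, leaving
\[
R_{u,v} - R_{u,w} + R_{v,w} = 2\Gamma_{vv} - 2\Gamma_{uv} + 2\Gamma_{uw} - 2\Gamma_{vw}.
\]
Multiplying by $\deg(w)$, summing over $w$, and halving gives
\[
2m(\Gamma_{vv} - \Gamma_{uv}) + \sum_w \deg(w)(\Gamma_{uw} - \Gamma_{vw}) = 2m(\Gamma_{vv} - \Gamma_{uv}) + (e_u - e_v)^T \Gamma \mathbf{d}.
\]
This coincides with the expression for $H_{u,v}$ obtained above, which proves the lemma.

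The only delicate point is the first step. We must justify that the hitting-time vector is the unique solution of the system, and that the pseudo-inverse gives a valid solution. Uniqueness holds up to additive constants because $G$ is connected. Validity holds because the right-hand side lies in $\mathrm{range}(L) = \mathbf{1}^\perp$. Since only differences $h(u) - h(v)$ enter the formula, the additive constant never matters. The rest is bookkeeping.
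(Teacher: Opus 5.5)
Your argument is correct: each step checks, and the final expressions agree term by term. It is complete once you state that hitting times are finite on a finite connected graph, which the first-step equations need.

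The paper gives no proof of this lemma and only cites Tetali, whose argument is electrical:
\begin{itemize}
\item $H_{\cdot,v}$ is the voltage, grounded at $v$, of the flow that injects current $\deg(w)$ at each vertex $w$ and extracts $2e(G)$ at $v$.
\item By superposition, this flow is the sum over $w$ of $\deg(w)$ times a unit $w\to v$ flow.
\item In a unit $w\to v$ flow, the voltage at $u$ is $\tfrac12(R_{u,v}+R_{v,w}-R_{u,w})$.
\end{itemize}

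Your proof is the matrix transcription of this argument. The identity $Lh=\mathbf{d}-2m\,e_v=\sum_w\deg(w)(e_w-e_v)$ is the injection statement. The identity $(e_u-e_v)^T L^\dagger(e_w-e_v)=\tfrac12(R_{u,v}+R_{v,w}-R_{u,w})$ is the unit-flow voltage. You sum these contributions in one step by expanding both sides in entries of $L^\dagger$.

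Your route rests only on \eqref{eqn:Rij-Laplacian} and needs no physical interpretation of resistance. It also sidesteps the paper's verbal definition of $R_{u,v}$: read literally, the current under a $\pm1$ volt potential would give $2/R_{u,v}$, not $R_{u,v}$. The electrical route, in return, explains why the specific combination $R_{u,v}+R_{v,w}-R_{u,w}$ appears.
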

Matthews~\cite{Matthews} proved the following connection between the hitting time and the cover time:
\begin{lemma}\label{lem:Matthews}
    For a graph $G$ and two vertices $u,v\in V(G)$, let $\mu_-$ and $\mu_+$ be real numbers such that $\mu_- \leq H_{u,v}\leq \mu_+$ for every pair of vertices $u,v$. Then,
    \[
         \mu_-\sum_{i=1}^n \frac{1}{i} \leq C_G\leq \mu_+ \sum_{i=1}^n \frac{1}{i}\,.
    \]
\end{lemma}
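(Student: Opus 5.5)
We prove the two inequalities by Matthews' random-ordering argument. Fix a starting vertex $s$ and consider the walk $(X_t)$ from $s$. Let $\sigma=(w_1,\dots,w_n)$ be a uniformly random ordering of $V(G)$, chosen independently of the walk. Let $T_i$ be the first time the walk has visited all of $w_1,\dots,w_i$, with $T_0=0$. Then the cover time from $s$ is $T_n=\sum_{i=1}^n (T_i-T_{i-1})$, so it suffices to bound $\E[T_i-T_{i-1}]$ for each $i$.

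\emph{Upper bound.} The increment $T_i-T_{i-1}$ is nonzero only if $w_i$ is the last of $w_1,\dots,w_i$ to be visited. Given the walk, the visiting order of these $i$ vertices is determined. Since $\sigma$ is uniform and independent of the walk, the event $A_i=\{w_i \text{ is last among } w_1,\dots,w_i\}$ has probability exactly $1/i$.

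Next, condition on $A_i$ and on the history up to $T_{i-1}$. At time $T_{i-1}$ the walk sits at some vertex $u=X_{T_{i-1}}\neq w_i$. By the strong Markov property, the remaining time to hit $w_i$ has conditional expectation $H_{u,w_i}\le\mu_+$. Here we use that $\sigma$ is independent of the walk, so conditioning on the identity of $w_i$ does not bias the future of the walk. Hence $\E[T_i-T_{i-1}]\le \mu_+/i$, and summing over $i$ gives $C_s\le \mu_+\sum_{i=1}^n 1/i$. Maximising over $s$ gives the upper bound on $C_G$.

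\emph{Lower bound.} Run the same argument with the roles reversed, now ordering only the vertices other than $s$. On $A_i$ the increment is at least a hitting time $H_{u,w_i}$ with $u\neq w_i$, hence at least $\mu_-$. This gives $C_s\ge \mu_-\sum_{i=1}^{n-1}1/i$. The bound with $\sum_{i=1}^{n}$ needs slightly more care: either absorb the missing $1/n$ term into the conventions (the hypothesis is only meaningful for $u\neq v$), or use the max over starting vertices. This discrepancy is asymptotically negligible. It is irrelevant for the paper in any case, which only uses the upper bound in the proof of \Cref{prop:cover-time}.

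\emph{Main obstacle.} The only delicate point is the conditioning step: we must justify that, given $A_i$ and $\mathcal F_{T_{i-1}}$, the remaining time to reach $w_i$ is distributed as a fresh hitting time. I would handle this by first conditioning on $\sigma$ and then applying the strong Markov property at the stopping time $T_{i-1}$.
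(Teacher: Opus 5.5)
Your upper bound is correct, and it is exactly Matthews' random-ordering argument. The paper gives no proof of this lemma and only cites Matthews, so there is no other route to compare against.

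The conditioning step you flag is harmless. For $i\ge 2$, $A_i=\{w_i\notin\{X_0,\dots,X_{T_{i-1}}\}\}$ is determined by $\sigma$ and $\mathcal{F}_{T_{i-1}}$. The strong Markov property therefore gives $\E[T_i-T_{i-1}\mid\sigma,\mathcal{F}_{T_{i-1}}]=\mathbf{1}_{A_i}H_{X_{T_{i-1}},w_i}$. Taking expectations yields $\E[T_i-T_{i-1}]\le\mu_+/i$, and likewise $\ge\mu_-/i$ when $\sigma$ orders $V(G)\setminus\{s\}$.

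Your treatment of the lower bound draws the wrong conclusion. The bound $C_s\ge\mu_-\sum_{i=1}^{n-1}1/i$ that your argument gives does not need patching: it is Matthews' actual lower bound. The version with $\sum_{i=1}^{n}$ stated in the lemma is false, and $K_n$ is a counterexample.
\begin{itemize}
\item On $K_n$, every step from a vertex other than $v$ hits $v$ with probability $1/(n-1)$. So $H_{u,v}=n-1$ for all $u\ne v$, and $\mu_-=n-1$ is admissible.
\item Once $k$ vertices have been visited, the next step reaches a new vertex with probability $(n-k)/(n-1)$.
\item Hence the expected cover time from every starting vertex is $(n-1)\sum_{j=1}^{n-1}1/j$, and $C_G=(n-1)\sum_{j=1}^{n-1}1/j<\mu_-\sum_{i=1}^{n}1/i$. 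Your $n-1$ bound is attained here.
\end{itemize}

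Neither of your suggested fixes works:
\begin{itemize}
\item $K_n$ is vertex-transitive, so maximizing over starting vertices changes nothing.
\item Allowing $u=v$ in the hypothesis rescues the statement only under the convention $H_{u,u}=0$. That forces $\mu_-\le 0$ and makes the bound vacuous. With the return-time convention one gets $H_{u,u}=n$ on $K_n$, and the counterexample stands.
\end{itemize}

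You should simply state that the lower bound must read $\mu_-\sum_{i=1}^{n-1}1/i$. As you correctly observe, this does not affect the paper. \Cref{prop:cover-time} uses only the upper bound, and the discrepancy is just $\mu_-/n$.
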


\subsection{Expander mixing lemma}
For $(n,d,\lambda)$-graphs, one of the most useful tools is the expander mixing lemma, proved by Alon and Chung~\cite{Alon-Chung-expander-mixing}.
\begin{lemma}[\cite{Alon-Chung-expander-mixing}]\label{lem:expander-mixing}
    Let $G$ be an $(n,d,\lambda)$-graph. Then, for any set $S\subseteq V(G)$ of size $s$, we have 
    \[
    |e_G(K) - ds^2/(2n)| \leq \lambda s/2\,.
    \]
    Moreover, for any two disjoint sets $S,T\subseteq V(G)$ of size $s,t$ respectively, we have
    \[
    |e_G(K,L) - dst/n| \leq \lambda\sqrt{st}\,.
    \]
\end{lemma}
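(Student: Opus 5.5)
We will prove \Cref{lem:expander-mixing} by the standard spectral argument, reading $e_G(K)$ and $e_G(K,L)$ in the statement as $e_G(S)$ and $e_G(S,T)$. Let $A$ be the adjacency matrix of $G$. Since $G$ is $d$-regular, the all-ones vector $\mathbf{1}$ is an eigenvector of $A$ with eigenvalue $d$. Because $A$ is symmetric, the subspace $\mathbf{1}^\perp$ is $A$-invariant. By the definition of an $(n,d,\lambda)$-graph, every eigenvalue of $A$ restricted to $\mathbf{1}^\perp$ has absolute value at most $\lambda$. Hence $|x^T A y|\le \lambda\|x\|\|y\|$ for all $x,y\in\mathbf{1}^\perp$, by Cauchy--Schwarz and the spectral decomposition on that subspace.

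For sets $S,T$ of sizes $s,t$, write the indicator vectors as
\[
\mathbf{1}_S=\tfrac{s}{n}\mathbf{1}+x,\qquad \mathbf{1}_T=\tfrac{t}{n}\mathbf{1}+y,\qquad x,y\in\mathbf{1}^\perp .
\]
By Pythagoras, $\|x\|^2=s-s^2/n=s(1-s/n)\le s$, and similarly $\|y\|^2\le t$.

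\emph{The bipartite bound.} For disjoint $S,T$ we have $e_G(S,T)=\mathbf{1}_S^TA\mathbf{1}_T$. Expand this bilinearly. The cross terms vanish, because $A\mathbf{1}=d\mathbf{1}$ is orthogonal to $x$ and $y$. This gives
\[
e_G(S,T)=\tfrac{st}{n^2}\,\mathbf{1}^TA\mathbf{1}+x^TAy=\tfrac{dst}{n}+x^TAy .
\]
Therefore $|e_G(S,T)-dst/n|\le\lambda\|x\|\|y\|\le\lambda\sqrt{st}$.

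\emph{The single-set bound.} Apply the same computation with $T=S$, using $\mathbf{1}_S^TA\mathbf{1}_S=2e_G(S)$. We get $2e_G(S)=ds^2/n+x^TAx$. Since $|x^TAx|\le\lambda\|x\|^2\le\lambda s$, dividing by $2$ gives $|e_G(S)-ds^2/(2n)|\le\lambda s/2$.

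There is no real obstacle here. The only points needing care are that the cross terms vanish, which requires $d$-regularity so that $\mathbf{1}$ is an eigenvector, and that $\lambda$ bounds $|x^TAx|$ on $\mathbf{1}^\perp$ even though $A$ may have negative eigenvalues there. This is exactly why $\lambda$ is defined via the second largest eigenvalue \emph{in absolute value}.
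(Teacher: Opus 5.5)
Your proof is correct. The paper gives no proof of this lemma and simply cites Alon and Chung; your argument is the standard spectral proof of that cited result. You decompose $\mathbf{1}_S,\mathbf{1}_T$ along $\mathbf{1}$ and $\mathbf{1}^\perp$, observe that the cross terms vanish by $d$-regularity, and bound $|x^TAy|\le\lambda\|x\|\|y\|$ on $\mathbf{1}^\perp$. Reading $e_G(K)$ and $e_G(K,L)$ as $e_G(S)$ and $e_G(S,T)$ is the intended interpretation. Your computation actually gives the slightly sharper bounds with the factors $s(1-s/n)$ and $\sqrt{st(1-s/n)(1-t/n)}$.
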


\subsection{Hamiltonicity of expanders}
The main tool driving behind the proof is a recent breakthrough on the Hamiltonicity of pseudorandom graphs due to Draganić, Montgomery, Munhá Correia, Pokrovskiy and Sudakov~\cite{Draganic-et-al}. To state it, we first introduce the notion of $C$-expanders:
\begin{definition}
    A graph $G = (V, E)$ is a \emph{$C$-expander} if it satisfies two conditions: 
    \begin{itemize}
        \item (Expansion) Every subset $X$ with $1 \le |X| \le n/2C$ satisfies $|N_G(X)| \ge C|X|$
        \item (Joinedness) There exists at least one edge between any two disjoint sets of vertices of size $n/2C$.
    \end{itemize}
\end{definition}
In~\cite{Draganic-et-al}, they showed $C$-expanders are Hamiltonians for large enough $C$:
\begin{theorem}[Dragani\'{c} et al~\cite{Draganic-et-al}]\label{thm:Hamiltonicity}
    There exists a constant $C_0 >0$ such that if $G$ is a $C$-expander with $C\geq C_0$, then $G$ is Hamiltonian.
\end{theorem}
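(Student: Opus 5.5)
This is a cited theorem, not proved in the paper. If I had to reconstruct a proof, I would follow the extendability/absorption strategy rather than a pure Pósa rotation argument. The reason is that rotation–extension alone needs expansion of sets up to linear size with a constant factor well above $2$, and boosting via random edges is unavailable here. The basic tool is the Friedman--Pippenger/Haxell notion of \emph{$(D,m)$-extendable} subgraphs. A subgraph $S\subseteq G$ of maximum degree $D$ is extendable if every $X$ with $|X|\le 2m$ satisfies $|N_G(X)\setminus V(S)|\ge (D-1)|X| - \sum_{x\in X\cap V(S)}(d_S(x)-1)$.

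The key lemmas are the following.
\begin{enumerate}[(i)]
\item Extendability is preserved under adding a leaf or a bare path between far-away vertices. The expansion condition of a $C$-expander with $m\approx n/2C$ gives this.
\item Joinedness lets one connect any two large enough sets by short paths while keeping extendability. Here one uses that removal of a small exceptional set keeps the expansion property.
\end{enumerate}
With these, one can greedily grow extendable paths and trees, and cover all but roughly $n/C$ vertices by a bounded number of vertex-disjoint paths.

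The plan then proceeds in four steps, in this order.
\begin{enumerate}[(a)]
\item Set aside a small random-like reservoir set $R$. Using joinedness and expansion, $R$ still expands well into itself and into the rest of $G$.
\item Build an \emph{absorbing/linking structure}. This is a sparse extendable subgraph $A$ such that for any small leftover set $L$ and any prescribed pairing of path endpoints, $A$ contains a path system through all of $V(A)$ realizing that pairing. Following Draganić et al., I would construct it from a sorting network of bounded-degree gadgets, each gadget being a short path system that can switch which endpoints are connected. Each gadget is embedded by extendable tree embedding.
\item Cover $V(G)\setminus(V(A)\cup R)$ by a linear forest using the extension lemma. This works until the uncovered part $L$ is small.
\item Absorb $L$ through the reservoir, then use the linking structure to join all path pieces into one cycle.
\end{enumerate}

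The main obstacle is step (b). One needs a linking structure that is simultaneously sparse enough to be embedded as an extendable subgraph using only constant expansion, and flexible enough to realize an \emph{arbitrary} connection pattern among linearly many endpoints. The logarithmic-depth sorting network gives the flexibility. Its constant degree is what keeps the degree parameter $D$ bounded, so that a single absolute constant $C_0$ suffices. I would first verify the extendability bookkeeping for one gadget, then argue inductively layer by layer.
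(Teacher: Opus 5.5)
The paper contains no proof of this statement. It is quoted from Dragani\'c, Montgomery, Munh\'a Correia, Pokrovskiy and Sudakov and used only as a black box, together with \Cref{lem:C-expander}, to deduce \Cref{thm:main}. Read as a reconstruction, your outline picks a reasonable toolkit (extendability, the connecting lemma via joinedness, absorption). However, steps (b)--(d) are asserted rather than argued, and as you specify them they cannot give an absolute constant $C_0$.

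The extension lemma only covers all but $\Theta(Dm)=\Theta(Dn/C)$ vertices, so the leftover $L$ in (c) has linear size and is not known in advance. You then ask $A$ to realise \emph{any} pairing of linearly many endpoints, and no such $A$ fits in $G$. A graph on $N$ vertices with maximum degree $\Delta$ has at most $(\Delta+1)^{2N}$ spanning linear forests. On the other hand, $2k$ endpoints admit $(2k-1)!!\ge (k/e)^k$ pairings. Hence $N\ge k\log(k/e)/(2\log(\Delta+1))$, which exceeds $n$ when $k=\Theta(n)$ and $\Delta=O(1)$; this is the same information-theoretic count that forces $\Omega(k\log k)$ comparators in a sorting network on $k$ wires. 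Here $\Delta=O(1)$ is unavoidable: $A$ must have maximum degree at most $D$ to be $(D,m)$-extendable, and $G$ itself may be, say, a random $\Theta(C^2)$-regular graph. Routing linearly many leftover vertices to fixed terminals by disjoint paths is not affordable either, because the connecting lemma only guarantees paths of length $O(\log n/\log D)$.

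So the missing ingredient is one of the following: a way to absorb a linear-size, adversarially placed leftover at $O(1)$ cost per vertex, or a way to cut the number of pieces down to roughly $n/\log n$ before any linking structure is invoked. Your outline supplies neither. For comparison, the sorting-network linking used by Hyde, Morrison, M\"uyesser and Pavez-Sign\'e for spanning trees comes with a polylogarithmic loss in $d/\lambda$.

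Step (a) is also wrong as stated. If $G$ is $d$-regular with $d=O(C^2)$, then for a random reservoir $R$ of density $\rho$, about $\rho(1-\rho)^{d}n$ vertices of $R$ are isolated in $G[R]$. So $R$ does not ``expand well into itself'', and nothing in your covering step keeps vertices with no neighbour in $R$ out of $L$. Similarly, if (ii) means that deleting any small set preserves expansion, that is false: delete $N(v)$. What is true is only that every small set $W$ lies in a slightly larger $W'$ such that $G-W'$ still expands, with a worse constant, and the construction has to be organised around that weaker fact.
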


\subsection{Probabilistic inequalities}
Recall the Paley-Zygmund inequality:
\begin{proposition}\label{prop:Paley-Zygmund}
    Let $Z\geq 0$ be a random variable with finite variance. Then, 
    \[
        \Pr(Z > 0) \geq \frac{\E[Z]^2}{\E[Z^2]}\,.
    \]
\end{proposition}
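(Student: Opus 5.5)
We prove the Paley--Zygmund inequality of \Cref{prop:Paley-Zygmund} directly from the Cauchy--Schwarz inequality, applied to $Z$ written as a product with an indicator. Since $Z\geq 0$, we have the pointwise identity $Z = Z\cdot \mathbbm{1}_{\{Z>0\}}$, because on the event $\{Z=0\}$ both sides vanish. Taking expectations gives $\E[Z] = \E[Z\,\mathbbm{1}_{\{Z>0\}}]$.

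The key step is to apply Cauchy--Schwarz to this product:
\[
\E[Z]^2 = \E\bigl[Z\,\mathbbm{1}_{\{Z>0\}}\bigr]^2 \leq \E[Z^2]\,\E\bigl[\mathbbm{1}_{\{Z>0\}}^2\bigr] = \E[Z^2]\,\Pr(Z>0)\,,
\]
using $\mathbbm{1}^2=\mathbbm{1}$. Both moments are finite, since $Z$ has finite variance and hence finite second moment, which also makes $\E[Z]$ finite. This justifies the use of Cauchy--Schwarz.

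It remains to divide by $\E[Z^2]$, which requires a short case split. If $\E[Z^2]>0$, dividing yields $\Pr(Z>0)\geq \E[Z]^2/\E[Z^2]$, as claimed. If $\E[Z^2]=0$, then $Z=0$ almost surely and $\E[Z]=0$, so the right-hand side is of the form $0/0$. Here the statement is vacuous, or trivially true under the usual convention, and we would note this explicitly.

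There is no real obstacle; the only point needing care is this degenerate case.
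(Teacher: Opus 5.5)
Your proof is correct. Writing $Z = Z\cdot\mathbf{1}_{\{Z>0\}}$ and applying Cauchy--Schwarz is the standard argument, and you handle the degenerate case $\E[Z^2]=0$ properly. The paper gives no proof of its own; it simply recalls the Paley--Zygmund inequality as a known tool. In its application in \Cref{lem:lower-bound-return}, $N$ is integer-valued, so $\E[N^2]\geq \E[N]>0$ and the degenerate case never arises.
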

We also need the following bound on the large deviation probability for a binomial random variable. It follows from the fact that the probability mass function of a binomially distributed random variable is increasing before the expectation:
\begin{proposition}\label{prop:bin-large-deviation}
    Let $X\sim \Bin(n,p)$. For $t\leq np$, we have
    \[
        \Pr(X\leq t) \leq t\binom{n}{t}p^t(1-p)^{n-t}\,.
    \]
\end{proposition}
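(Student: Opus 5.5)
The statement in question is the binomial lower-tail bound of \Cref{prop:bin-large-deviation}. I would prove it directly from the unimodality of the binomial probability mass function; the only real work is checking where the mass function increases.

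Write $p_k=\binom{n}{k}p^k(1-p)^{n-k}$ for $0\le k\le n$, so that $\Pr(X\le t)=\sum_{k=0}^{\lfloor t\rfloor}p_k$. First compute the ratio of consecutive terms,
\[
\frac{p_{k}}{p_{k-1}}=\frac{(n-k+1)p}{k(1-p)}\,.
\]
This ratio is at least $1$ exactly when $(n-k+1)p\ge k(1-p)$, that is, when $k\le (n+1)p$. Any integer $k\le t\le np$ satisfies $k\le np<(n+1)p$. Hence $p_0\le p_1\le\dots\le p_{\lfloor t\rfloor}$, which means the mass function is nondecreasing on $\{0,\dots,\lfloor t\rfloor\}$.

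Every summand is therefore at most $p_{\lfloor t\rfloor}$. This gives
\[
\Pr(X\le t)\le (\lfloor t\rfloor+1)\,p_{\lfloor t\rfloor}\,.
\]
The stated bound has the factor $t\binom{n}{t}p^t(1-p)^{n-t}$, not $(t+1)$. The paper says floors and ceilings are omitted, so I would treat $t$ as an integer and set aside the off-by-one between $t+1$ and $t$. If the stated factor is needed exactly, I would bound the $k=0$ term separately: for $t\ge1$ we have $p_0\le p_1$. Then the $t+1$ terms can be grouped so the sum is at most $t\,p_t$ plus a negligible correction, or one can simply absorb the constant, since in every application $t=\Theta(\log n)$ and a factor $(t+1)/t$ is irrelevant.

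The main obstacle is only this bookkeeping at the boundary: the count of terms, and the case $t=0$, where the inequality as literally written would give $0$ and fail. I would state the proof for integer $t\ge1$ and note that the $(t+1)$ version is what is actually used.
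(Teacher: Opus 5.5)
Your argument is the same as the paper's, which only invokes that the binomial mass function is nondecreasing up to the mean. As you observe, this gives $\Pr(X\le t)\le(\lfloor t\rfloor+1)\,p_{\lfloor t\rfloor}$. That is all the proof of \Cref{lem:number-of-vists} needs, since there $t=\rho\log n$ and the extra factor is harmless.

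Drop the ``grouping'' repair and the restriction to $t\ge 1$, though. The bound with factor $t$ already fails for $t=1$ whenever $p<1$, since it would say $p_0+p_1\le p_1$. It can also fail for $t=2$: with $n=10$ and $p=1/5$ one gets $\Pr(X\le 2)\approx 0.678>2p_2\approx 0.604$. It does hold for integer $t\ge 3$, via $p_{t-j}/p_t\le\prod_{i<j}(1-i/t)$ and $\sum_{j=0}^{t}\prod_{i<j}(1-i/t)\le t$. Still, the $(t+1)$ form is the correct clean statement to prove.
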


\section{Proof}\label{sec:proof}
We first prove~\Cref{prop:cover-time}:
\begin{proof}[Proof of~\Cref{prop:cover-time}]
    From~\Cref{prop:Ruv-lower-bound}, we know $R_{u,v}\geq 2/(d+1)$. We now show that $R_{u,v}\leq 2/(d-\lambda)$.

    Let $d = \lambda_1\geq \cdots \geq \lambda_n$ be the eigenvalues of the adjacency matrix of $G$. Recall that $|\lambda_i|< \lambda$ for all $i\geq 2$.
    Consider the spectral decomposition of the Laplacian matrix: $L = \sum_{k=2}^n \mu_i u_i u_i^T$, where $\mu_i = d - \lambda_i \geq d-\lambda$ and $\{u_i\}_{i=2}^n$ forms an orthonormal basis. Thus, $L^\dagger = \sum_{k=2}^n (1/\mu_k)u_k u_k^T$.
    From~\eqref{eqn:Rij-Laplacian}, we know that 
    \[
        R_{i,j} = \sum_{k=2}^n \frac{1}{\mu_k}( (u_{k})_i - (u_k)_j)^2 \leq \frac{2}{d-\lambda}\,,
    \]
    where we used the orthonormality of $u_i$.
    Plugging this into~\Cref{lem:Tetali} and using the facts that $d$ and $d/\lambda$ are sufficiently large, we conclude
    \[
        (1-0.1\eps)n\leq \frac{1}{2} nd \left(\frac{4}{d+1} - \frac{2}{d-\lambda}\right)\leq H_{i,j} \leq \frac{1}{2}nd\left(\frac{4}{d-\lambda} -\frac{2}{d+1}\right) \leq (1+0.1\eps) n
    \]
    Then, applying~\Cref{lem:Matthews} with $\mu_- = (1-0.1\eps)n$ and $\mu_+ = (1+0.1\eps)n$ and using the fact that $\sum_{i=1}^n 1/i = (1+o(1))\log n$ finishes the proof.
\end{proof}

We now prove~\Cref{thm:main}.
The proof consists of two parts. First, we show that \textbf{whp} the trace visits every vertex at least $\rho \log n$ times for some $\rho = \rho(\eps)$. 
Using this, we show the trace graph $\Gamma$ is a $C'$-expander for some $C'$ that is sufficiently large as long as $C$ is, which then allows us to apply~\Cref{thm:Hamiltonicity}. 

\subsection{Number of visits}
For a random walk on a graph $G$ of length $L$ and a vertex $v\in V(G)$, let $\gamma(v)$ denote the number of times the random walk visits $v$.
The goal of this section is to prove the following:
\begin{lemma}\label{lem:number-of-vists}
    Let $\varepsilon > 0$. There exists constants $C = C(\eps) >0$ and $\rho = \rho(\eps) > 0$ such that the following holds: Let $G$ be a $(n,d,\lambda)$-graph with $d/\lambda \geq C$ and $L = (1+\eps) n\log n$. Consider a random walk on $G$ starting from an arbitrary vertex and of length $L$. Then, \textbf{whp} $\gamma(v)\geq \rho \log n$ for every vertex $v\in V(G)$.
\end{lemma}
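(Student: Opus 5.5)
The plan is to show that for a fixed vertex $v$, $\Pr(\gamma(v) < \rho\log n) \le n^{-1-\eta}$ for some $\eta = \eta(\eps) > 0$, and then take a union bound over all $n$ vertices. We first discard a burn-in of $10\log n$ steps. By \Cref{thm:quick-mixing} and the coupling \eqref{eqn:coupling}, the walk after the burn-in agrees with a stationary walk $Y$ except on an event of probability at most $1/n$; this is not small enough for a per-vertex union bound, so we instead use the decomposition below directly.

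Cut the walk into $m = L/\ell$ consecutive blocks of length $\ell = K\log n$, where $K$ is a large constant. Inside each block, use only the last $\ell - 10\log n$ steps as the observation window. Conditioned on the past, the position at the start of the window has total variation distance at most $1/n$ from $\pi$; here $\pi$ is uniform since $G$ is regular. For stronger control we will in fact use the pointwise bound $|(P^t)_{u,w} - 1/n| \le (\lambda/d)^t \le n^{-3}$ for $t \ge 10\log n$, which follows from the spectral decomposition of $P = A/d$.

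Let $Z_j$ be the indicator that the walk visits $v$ inside the window of block $j$. The key estimate is
\[
p := \Pr(Z_j = 1 \mid \text{past}) \ge (1-o(1))\frac{\ell}{n}\cdot\frac{1}{1+\delta},
\]
where $\delta \to 0$ as $C \to \infty$. This comes from Paley--Zygmund (\Cref{prop:Paley-Zygmund}) applied to the number $N$ of visits to $v$ in the window. We have $\E N \approx \ell/n$, and
\[
\E N^2 \le \E N \cdot \Bigl(1 + \sum_{t \ge 1} (P^t)_{v,v}\Bigr).
\]
The return sum is bounded by
\[
\sum_t \bigl((\lambda/d)^t + 1/n\bigr) \le \frac{\lambda/d}{1-\lambda/d} + \frac{\ell}{n} = \delta + o(1).
\]
Here we use $(P^t)_{v,v} \le 1/n + (\lambda/d)^t$, which again comes from the spectral decomposition. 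Hence $p \ge (1-o(1))\ell/((1+\delta)n)$.

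Then $\sum_j Z_j$ stochastically dominates $\Bin(m,p)$, with mean
\[
mp \ge (1-o(1))\frac{L}{(1+\delta)n} = \frac{1+\eps}{1+\delta}\,(1-o(1))\log n \ge (1+\eps/2)\log n
\]
for $C$ large. By \Cref{prop:bin-large-deviation} with $t = \rho\log n$, using $(1-p)^{m-t} \approx e^{-mp}$ and
\[
\binom{m}{t}p^t \le \Bigl(\frac{emp}{t}\Bigr)^{t} = \exp\bigl(\rho\log(e\mu/\rho)\log n\bigr), \qquad \mu = O(1+\eps),
\]
we get
\[
\Pr\Bigl(\sum_j Z_j < \rho\log n\Bigr) \le n^{-(1+\eps/2) + \rho\log(e(1+\eps)/\rho) + o(1)}.
\]
Choosing $\rho$ small enough that $\rho\log(e(1+\eps)/\rho) < \eps/4$ gives $n^{-1-\eps/4}$, and the union bound over $v$ finishes the proof. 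Since $\gamma(v) \ge \sum_j Z_j$, this suffices.

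The main obstacle is the per-block lower bound on $p$: the loss factor must be only $1 + \delta$ with $\delta \to 0$ as $d/\lambda \to \infty$. Three losses have to be kept small.
\begin{itemize}
\item The fraction $10/K$ of each block spent in burn-in is a constant loss, so $K$ must be large relative to $1/\eps$.
\item The term $\E N^2$ must be controlled uniformly in the starting distribution of the window. This relies on the pointwise spectral bound and not merely on total variation.
\item The conditional independence structure of the blocks must be handled. The walk is Markov, so conditioning on the state at the start of each block suffices, and the bound on $p$ holds for every starting state.
\end{itemize}
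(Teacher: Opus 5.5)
Your proposal is correct and follows essentially the paper's route: consecutive blocks with a $10\log n$ burn-in, Paley--Zygmund on the number of visits to $v$ in each window using the spectral bound on $\sum_t (P^t)_{v,v}$, the Markov property to dominate a binomial, \Cref{prop:bin-large-deviation}, and a union bound over $v$; the only differences are that you use blocks of length $K\log n$ (hit probability $\Theta(\log n/n)$, with the burn-in costing a factor $1-10/K$) rather than windows of length $n/\sqrt{C}$ (hit probability about $1/\sqrt{C}$, negligible burn-in), and pointwise spectral bounds rather than the total-variation coupling with the stationary walk. One harmless slip: the second moment satisfies $\E N^2\le \E N\,\bigl(1+2\sum_{t\ge 1}(P^t)_{v,v}\bigr)$, because you dropped the factor $2$ on the cross terms, so your loss factor is $1+2\delta$ rather than $1+\delta$, which still tends to $1$ as $C\to\infty$.
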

We begin by establishing an auxiliary lemma, giving a lower bound on the probability a random walk starting from $u$ reaches $v$ in $n/\sqrt{C}$ steps:
\begin{lemma}\label{lem:lower-bound-return}
    Let $\varepsilon > 0$. There exists a constant $C = C(\eps) >0$ such that the following holds: Let $G$ be a $(n,d,\lambda)$-graph with $d/\lambda \geq C$ and $T = n/\sqrt{C}$. Then, for any pair of distinct vertices $u,v\in V(G)$, the probability a simple random walk $X$ starting from $u$ reaches $v$ at least once within $T$ steps is at least $(1-0.1\eps)/\sqrt{C}$.
\end{lemma}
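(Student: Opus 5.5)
We prove the bound with the second moment method, applying the Paley--Zygmund inequality (\Cref{prop:Paley-Zygmund}) to $Z=\sum_{t=1}^{T}\mathbbm{1}\{X_t=v\}$, the number of visits to $v$ in the first $T=n/\sqrt{C}$ steps. Since $\{Z>0\}$ is exactly the event that $v$ is reached within $T$ steps, it suffices to show
\[
\E[Z]\geq (1-0.01\eps)T/n \qquad\text{and}\qquad \E[Z^2]\le (1+0.05\eps)\E[Z].
\]
Then $\Pr(Z>0)\ge \E[Z]^2/\E[Z^2]\ge (1-0.1\eps)T/n=(1-0.1\eps)/\sqrt C$.

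\textbf{Step 1 (pointwise transition bounds).} Write $P=A/d=\sum_k (\lambda_k/d)u_ku_k^T$ with $u_1=\mathbf 1/\sqrt n$. Then
\[
\bigl|(P^t)_{u,v}-1/n\bigr|\le \sum_{k\ge2}|\lambda_k/d|^t|(u_k)_u(u_k)_v| \le (\lambda/d)^{t}\le C^{-t},
\]
using Cauchy--Schwarz and $\sum_k (u_k)_u^2=1$. For large $t$ we want the sharper estimate from \Cref{thm:quick-mixing}, but the entrywise bound $C^{-t}$ already gives $(P^t)_{u,v}\le 1/n+n^{-2}$ once $t\ge 3\log n/\log C$. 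For small $t$ we only have the trivial bound $(P^t)_{u,v}\le C^{-t}+1/n$.

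\textbf{Step 2 (first moment).} Summing Step 1,
\[
\E[Z]=\sum_{t=1}^T (P^t)_{u,v}\ge T/n-\sum_{t\ge1}C^{-t}\ge T/n - 2/C.
\]
Since $T/n=C^{-1/2}\gg 1/C$, this gives $\E[Z]\ge (1-0.01\eps)T/n$ for $C$ large. This is where the choice $T=n/\sqrt C$ is used.

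\textbf{Step 3 (second moment).} By the Markov property,
\[
\E[Z^2]=\E[Z]+2\sum_{s<t\le T}(P^s)_{u,v}(P^{t-s})_{v,v}\le \E[Z]\Bigl(1+2\sum_{r=1}^{T}(P^r)_{v,v}\Bigr).
\]
The return sum is
\[
\sum_{r=1}^T (P^r)_{v,v}\le T/n+\sum_{r\ge1}C^{-r}\le C^{-1/2}+2/C,
\]
which can be made at most $0.02\eps$ for large $C$. This yields $\E[Z^2]\le(1+0.05\eps)\E[Z]$, and the conclusion follows from Paley--Zygmund.

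\textbf{Main obstacle.} The key point is that the return contribution $\sum_r (P^r)_{v,v}$ must be $o(1)$, not merely $O(1)$: an $O(1)$ bound would lose a constant factor in Paley--Zygmund, whereas the statement requires the sharp constant $1-0.1\eps$. This works only because $T$ is a vanishing fraction of $n$, so that $T/n$ is small, together with the spectral decay $(\lambda/d)^r$ of the nonstationary part. The Cauchy--Schwarz bound on eigenvector entries in Step 1 is the step to check carefully. If the absolute-value eigenvalue condition is not available in the form needed, one would replace $P$ by the lazy walk, though the paper's assumption that all $|\lambda_i|\le\lambda$ for $i\ge2$ should suffice.
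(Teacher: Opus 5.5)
Your proposal is correct and follows essentially the same argument as the paper: Paley--Zygmund applied to the visit count, with the spectral bounds $\E[N]\ge T/n-2/C$ and $\E[N^2]\le\bigl(1+2R_v(T)\bigr)\E[N]$, where $R_v(T)\le T/n+2/C$. The only difference is cosmetic: you bound $|(P^t)_{u,v}-1/n|\le(\lambda/d)^t$ termwise before summing over $t$, while the paper sums the geometric series separately for each eigenvalue. Your aside about needing the mixing-time theorem for large $t$ is unnecessary.
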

\begin{proof}
    Let $N$ denote the number of visits to $v$ in the first $T$ steps and $I_t$ the indicator random variable for $X_t = v$ so that $N = \sum_{t = 1}^T I_i$. By linearity of expectation, we have 
    \[
        \E[N] = \sum_{t=1}^T \Pr(X_t = v|X_0 = u) = \sum_{t=1}^T (P^t)_{u,v}\,.
    \]
    Recall that the transition matrix $P = D^{-1}A$ and let $d = \lambda_1\geq \dots\geq \lambda_n$ are eigenvalues of $A$. Define $S = D^{1/2}PD^{-1/2} = D^{-1/2}AD^{-1/2}$. Since $S$ is a symmetric real square matrix, its spectral decomposition can be written as $S = \sum_{i=1}^n\mu_i\phi_i \phi_i^T$, where $\mu_i = \lambda_i/d\in [-1,1]$ are eigenvalues of $S$ and $\{\phi_i\}_{i=1}^n$ forms an orthonormal basis. 
    Observe that the stationary distribution of $X$ is $\pi = (1/n,\dots, 1/n)$, $\mu_1 = 1$ and $\phi_1(v) = (1/\sqrt{n},\dots, 1/\sqrt{n})$.

    Using the fact that 
    \[
        P^t = D^{-1/2}S^t D^{1/2} = S^t = \sum_{i=1}^n \mu_i^t \phi_i \phi_i^T\,,
    \]
    we conclude
    \[
        \E[N] = \frac{T}{n} + \sum_{t=1}^T \sum_{i=2}^n \mu_i^t\phi_i(u)\phi_i(v) = \frac{T}{n} + \sum_{i=2}^n \frac{\mu_i(1-\mu_i^T)}{1-\mu_i}\phi_i(u)\phi_i(v)\,.
    \]
    By Cauchy-Schwarz inequality and orthonormality of $\phi_i$, we know 
    \[
    \left|\sum_{i=2}^n \phi_i(u)\phi_i(v)\right|\leq \left(\sum_{i=2}^n \phi_i(u)^2\right)\left(\sum_{i=2}^n \phi_i(v)^2\right) \leq 1\,.
    \]
    Moreover, using the fact that $|\mu_i| = |\lambda_i|/d \leq \lambda/d \leq 1/C$ for all $i\geq 2$, we conclude 
    \[
        \left|\frac{\mu_i(1-\mu_i^T)}{1-\mu_i}\right|\leq   \left|\frac{\mu_i}{1-\mu_i}\right|\leq \frac{1/C}{1-1/C} \leq 2/C\,.
    \]
    Thus, $\E[N]\geq T/n - 2/C$.

    Now we compute $\E[N^2]$. Observe that
    \[
        \E[N^2] = \E\left[\left(\sum_{i=1}^T I_t\right)^2\right] = \E[N] + 2\sum_{1\leq t_1<t_2\leq T}\E[I_{t_1}I_{t_2}]\,.
    \]
    Using the Markov's property, we have
    \[
        \E[I_{t_1}I_{t_2}] = \Pr(\{X_{t_1} = v\}\cap \{X_{t_2} = v\}\mid X_0 = u) = \Pr(X_{t_1} = v\mid X_0 = u)\cdot \Pr(X_k = v\mid X_0 = v)\,,
    \]
    where $k = t_2-t_1\geq 1$.
    
    Let $R_v(T) = \sum_{t=1}^T(P^t)_{v,v}$. Since $R_v(T)$ is monotone non-decreasing in $T$, we conclude 
    \begin{align*}
        \E[N^2] &= \E[N] + 2\sum_{t_1=1}^T \Pr(X_{t_1} = v\mid X_0= u)\sum_{k=1}^{T-t_1}\Pr(X_k = v\mid X_0 =v)\\
        &= \E[N] +  2\sum_{t_1=1}^T\Pr(X_{t_1}=v\mid X_0 = u)\cdot  R_{v}(T-t_1)\\
        &\leq \E[N] + 2\E[N] \cdot R_v(T) = (1+ 2R_v(T))\cdot \E[N]\,.
    \end{align*}
    By a similar computation as for $\E[N]$, we know that
    \[
        R_v(T) = \frac{T}{n} + \sum_{i=2}^n \frac{\mu_i(1-\mu_i^T)}{1-\mu_i}\phi_i(v)^2 \leq \frac{T}{n} + \frac{2}{C} \,.
    \]
    Applying~\Cref{prop:Paley-Zygmund} and using $T = n/\sqrt{C}$ for a sufficiently large $C$, we conclude 
    \[
        \Pr(N > 0) \geq \frac{\E[N]^2}{\E[N^2]} \geq \frac{\E[N]}{2R_v(T) + 1} \geq \frac{T/n-2/C}{2(T/n + 2/C)+1}\geq \frac{1}{\sqrt{C}}(1-0.1\eps)\,,
    \]
    as desired.
\end{proof}

We have the following immediate corollary for the stationary walk:
\begin{corollary}\label{cor:stationary-walk}
    Let $\varepsilon > 0$. There exists a constant $C = C(\eps) >0$ such that the following holds: Let $G$ be a $(n,d,\lambda)$-graph with $d/\lambda \geq C$ and $T = n/\sqrt{C}$. Then, for any vertex $v\in V(G)$, the probability a stationary walk visits $v$ at least once within $T$ steps is at least $(1-0.1\eps)/\sqrt{C}$.
\end{corollary}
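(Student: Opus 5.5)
The plan is to average \Cref{lem:lower-bound-return} over the starting vertex. The stationary walk $Y$ has $Y_0$ uniformly distributed on $V(G)$, since $\pi=(1/n,\dots,1/n)$ for a $d$-regular graph. Let $E_v$ denote the event that $Y$ visits $v$ at some time in $\{0,1,\dots,T\}$, or equivalently in $\{1,\dots,T\}$; either reading only helps. Conditioning on $Y_0$ gives
\[
    \Pr(E_v) = \frac{1}{n}\mathbbm{1}\{Y_0=v \text{ counted}\} + \frac{1}{n}\sum_{u\neq v}\Pr(X \text{ from } u \text{ hits } v \text{ within } T \text{ steps}).
\]
The first term is nonnegative, and by \Cref{lem:lower-bound-return} each summand is at least $(1-0.1\eps)/\sqrt{C}$. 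This yields
\[
    \Pr(E_v)\ge \frac{n-1}{n}\cdot\frac{1-0.1\eps}{\sqrt{C}}.
\]

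The only issue is the factor $(n-1)/n$, together with the case $u=v$. I will handle it in one of two ways.

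The first option is to treat $u=v$ directly. If the lemma's event includes time $0$, then the probability is $1$ when $Y_0=v$, so that term contributes $1/n \ge (1/n)(1-0.1\eps)/\sqrt C$ and the bound holds exactly. If only times $1,\dots,T$ count, I will rerun the Paley--Zygmund computation from the proof of \Cref{lem:lower-bound-return} with $u=v$. That proof never used $u\neq v$: it only needed $\E[N]\ge T/n-2/C$ and $\E[N^2]\le (1+2R_v(T))\E[N]$, and both hold verbatim for $u=v$ by the same spectral bound $|\sum_{i\ge2}\mu_i(1-\mu_i^T)/(1-\mu_i)\,\phi_i(v)^2|\le 2/C$. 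So the lemma's bound also holds for $u=v$.

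The second option, simpler still, is to avoid the case analysis. The Paley--Zygmund bound in that proof actually gives slack, roughly $1/\sqrt C$ times $1-O(1/\sqrt C)$, compared with $1-0.1\eps$. Since $n$ is large, the factor $(n-1)/n$ can be absorbed by choosing $C$ large and $n\to\infty$.

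I expect no real obstacle. The only point needing care is the bookkeeping of the $u=v$ term, and I would resolve it using the observation that the proof of \Cref{lem:lower-bound-return} never uses distinctness. Averaging over the uniform $Y_0$ then gives the corollary with the same constant $C(\eps)$.
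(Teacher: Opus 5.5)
Your proposal is correct and is essentially the paper's proof: condition on the uniformly distributed $Y_0$ and average \Cref{lem:lower-bound-return} over the starting vertex. Your extra care with the $u=v$ term, which the paper silently glosses over, is sound, because the spectral and Paley--Zygmund computations in that lemma never use $u\neq v$; alternatively, the factor $(n-1)/n$ is harmless since $n>d$ is large once $C$ is.
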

\begin{proof}
    Recall that the stationary distribution for a $d$-regular graph is the uniform distribution over all vertices $(1/n,\dots, 1/n)$. Also, the stationary walk $Y$ and the simple random walk $X$ have the same distribution conditionally on having the same starting vertex. 
    Since~\Cref{lem:lower-bound-return} holds for every starting vertex $u\in V(G)$, the conclusion follows by integrating over the starting vertex. 
\end{proof}

We are now ready to prove~\Cref{lem:number-of-vists}:
\begin{proof}[Proof of~\Cref{lem:number-of-vists}]
    Let $X$ denote the simple random walk and $Y$ denote the stationary walk.
    Recall $L = (1+\eps) n\log n$ and $T = n/\sqrt{C}$. Let $b = \tau(1/n)$. By~\Cref{thm:quick-mixing}, we know $b\leq 10 \log n$.
    We divide $X$ into consecutive segments of length $10\log n + T$ and treat each segment separately. For simplicity, the subscript here are with respect to the starting point in each segment.
    Within each segment, by the definition of the mixing time, 
    we know $\dtv(X_t, Y_t)\leq 1/n$ for any $t\geq 10\log n$. Using~\eqref{eqn:coupling}, we conclude $$\Pr(\exists 10\log n\leq s\leq 10\log n+T: X_s\neq Y_s)\leq 1/n\,.$$
    Fix an arbitrary vertex $v\in V(G)$.
    By Markov's property and~\Cref{cor:stationary-walk}, for each segment, the walk visits $v$ with probability at least $(1-0.1\eps)/\sqrt{C} - 1/n \geq (1-0.2\eps)/\sqrt{C}$. Thus, by~\Cref{prop:bin-large-deviation}, the probability $v$ is visited fewer than $\rho \log n$ times within the $L$ steps is at most 
    \begin{align*}
        &\Pr\left(\Bin\left(\frac{L}{10\log n + T}, \frac{1-0.2\eps}{\sqrt{C}}\right)\leq 
        \rho \log n \right)\\
        &\quad\leq \rho \log n \cdot \binom{L/(10\log n+T)}{\rho \log n} ((1-0.2\eps)/\sqrt{C})^{\rho \log n} ( 1-  (1-0.2\eps)/\sqrt{C})^{L/(10\log n +T) - \rho\log n} \\
        &\quad=o(n^{-1-0.1\eps})\,,
    \end{align*}
    if we choose $\rho$ sufficiently small and $C$ sufficiently large.
    The conclusion follows from a union bound over all vertices $v\in V(G)$.
\end{proof}

\subsection{C-expander}
The goal of this section is to prove the following:
\begin{lemma}\label{lem:C-expander}
     Let $\varepsilon > 0$. There exists a constant $C = C(\eps) >0$ such that the following holds: Let $G$ be a $(n,d,\lambda)$-graph with $d/\lambda \geq C$ and $L = (1+\eps) n\log n$. 
     Let $\Gamma$ be the trace graph obtained by a random walk of length $L$ starting from any vertex. Then,
     \textbf{whp} $\Gamma$ is a $C'$-expander for $C' = \log \log C$.
\end{lemma}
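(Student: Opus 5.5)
The plan is to reduce the trace to a simpler random graph with independent choices, and then verify expansion and joinedness by first moment computations. To decouple the walk from its trace, I would use the standard \emph{pre-sampling} construction. For each vertex $a\in V(G)$, fix an infinite sequence $\xi_a(1),\xi_a(2),\dots$ of independent uniformly random neighbours of $a$, independent over all $a$. Run the walk so that at its $k$-th departure from $a$ it moves to $\xi_a(k)$; this generates exactly the law of the simple random walk. Let $m=\rho\log n$, with $\rho=\rho(\eps)$ from \Cref{lem:number-of-vists}, and let $H$ be the graph with edges $\{a,\xi_a(k)\}$ for all $a$ and all $k\le m$. On the event of \Cref{lem:number-of-vists}, every vertex is visited, hence departed from, at least $m$ times (up to losing one visit at the end, which is harmless). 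So whp $H\subseteq\Gamma$. Since being a $C'$-expander is monotone under adding edges, it suffices to show that $H$ is whp a $C'$-expander. The point is that $H$ has fully independent edge choices, so union bounds become straightforward.

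For \textbf{joinedness}, fix disjoint $A,B$ with $|A|=|B|=n/2C'$. By \Cref{lem:expander-mixing},
\[
e_G(A,B)\ge \frac{d}{n}\cdot\frac{n^2}{4C'^2}-\lambda\frac{n}{2C'}\ge \frac{dn}{8C'^2},
\]
using $\lambda/d\le 1/C\ll 1/C'$. The probability that no $\xi_a(k)$ with $a\in A$ and $k\le m$ lands in $B$ is
\[
\prod_{a\in A}\bigl(1-d_B(a)/d\bigr)^{m}\le \exp\bigl(-m\,e_G(A,B)/d\bigr)\le \exp\bigl(-\rho n\log n/(8C'^2)\bigr).
\]
This beats the $4^n$ choices of $(A,B)$.

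For \textbf{expansion}, fix $1\le s\le n/2C'$ and $X$ with $|X|=s$. If $|N_H(X)|<C's$, then there is a set $Y$ with $|Y|=C's$ such that all $\xi_x(k)$, for $x\in X$ and $k\le m$, lie in $Z=X\cup Y$, where $|Z|\le (C'+1)s$. By independence and AM--GM, this probability is at most
\[
\prod_{x\in X}\Bigl(\frac{d_Z(x)}{d}\Bigr)^{m}\le q^{sm},\qquad q:=\frac{\sum_{x\in X}d_Z(x)}{sd}\le \frac{2e_G(Z)}{sd}.
\]
By \Cref{lem:expander-mixing},
\[
q\le \frac{(C'+1)^2 s}{n}+\frac{(C'+1)\lambda}{d}.
\]
The union bound over the choices of $X$ and $Y$ costs at most $(en/s)^{(C'+1)s}$. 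I would split the range of $s$ into two regimes.
\begin{itemize}
\item If $s\le n/C'^{3}$, then $q\le 2(C'+1)/C'+\dots$ is not yet good enough. Instead I would use the finer bound $\sum_x d_Z(x)\le e_G(X,Z)$-type counts, to get $q\lesssim (C'+1)s/n+\sqrt{C'+1}/C$. When $s$ is tiny, the $\lambda$-term dominates and $q\le C^{-1/2}$, so $q^{sm}\le n^{-s\rho\log C/2}$. This beats $n^{(C'+1)s}$ precisely because $C'=\log\log C\ll\rho\log C$; this is where the choice of $C'$ enters. For intermediate $s$, the term $(C'+1)s/n$ is compared against $(s/en)^{(C'+1)/m}$, and $q^{m}$ wins since $m\to\infty$.
\item If $s$ is linear, with $n/C'^3\le s\le n/2C'$, then $q$ is bounded away from $1$ (say $q<0.9$). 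Then $q^{sm}=n^{-\Omega(s\rho)}$ dominates $e^{O(s C'\log C')}$.
\end{itemize}
Summing over $s$ gives $o(1)$.

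The main obstacle is the small- and intermediate-$s$ expansion estimate. There the bound on $q$ must come from $e(X,Z)$ with $X\subseteq Z$, and the $\lambda$-error term in \Cref{lem:expander-mixing} must be handled carefully so that it beats the entropy factor $(en/s)^{(C'+1)s}$. This is the only place where the relation between $\rho$, $C$ and $C'=\log\log C$ matters. The reduction $H\subseteq\Gamma$ via pre-sampling, and the joinedness computation, are routine by comparison.
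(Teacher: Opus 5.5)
Your proposal is correct and follows the same route as the paper. You take the $\rho\log n$ visits from \Cref{lem:number-of-vists}, treat the exits from each vertex as independent uniform neighbours, and run first-moment union bounds: AM--GM with \Cref{lem:expander-mixing} for expansion, and the lower bound on $e_G(A,B)$ for joinedness. Your pre-sampled exit sequences $\xi_a(k)$ with $H\subseteq\Gamma$ make rigorous the independence that the paper invokes informally. Your count $d_Z(x)$ with $Z=X\cup Y$ also correctly includes the edges inside $X$, which the paper's bound $\sum_{v\in A}d_v\le e_G(A,N_\Gamma(A))$ leaves out.

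One small correction concerns which bound is needed where. The refined bound $\sum_{x\in X}d_Z(x)=2e_G(X)+e_G(X,Y)$, giving $q\le (C'+1)s/n+(1+\sqrt{C'})\lambda/d$, is what you need in the linear regime, since the crude term $(C'+1)^2s/n$ exceeds $1$ near $s=n/2C'$. For $s\le n/C'^3$ the crude bound already suffices, so your remark there is backwards; simply use the refined bound throughout.
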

\begin{proof}
    To prove a graph is $C$-expander, we need to verify the expansion and joinedness properties. By~\Cref{lem:number-of-vists}, there exists a constant $\rho > 0$ such that $\gamma(v) \geq \rho \log n$ for all $v\in V(G)$. 
    
    We start by proving the expansion property. Suppose for contradiction there exists a set $A\subseteq V(G)$ of size $a\leq n/(2C')$ such that $|N_\Gamma(A)|\leq C'a$. For $v\in A$, let $d_v \coloneqq |N_\Gamma(v)|$. Note that the existence of such a set $A$ implies for every vertex $v\in A$, every time the random walk exits $v$, it goes to one of the $d_v$ neighbors of $v$ in $N_\Gamma(v)$, which happens with probability $d_v/d$ independently. Using the independence between exits and that $\gamma(v)\geq \rho \log n$, we conclude the probability for a fixed outcome of $A$ and $N_\Gamma(A)$ satisfying the conditions above to happen is at most
    $$\prod_{v\in A}\left(\frac{d_v}{d}\right)^{\rho \log n} = \biggl(\prod_{v\in A}\frac{d_v}{d}\biggr)^{\rho \log n}\,.$$
    By~\Cref{lem:expander-mixing}, $|N_\Gamma(A)|\leq C'a$ and $\lambda \leq d/C$, we know 
    \[
        \sum_{v\in A}d_v \leq e_G(A, N_\Gamma(A)) \leq \frac{d}{n} C'a^2 + \frac{d\sqrt{C'}}{C}a\,.
    \]
    When $a\geq 10n/(C\sqrt{C'})$, we have $\sum_{v\in A}d_v \leq 1.1dC'a^2/n$ and otherwise we have $\sum_{v\in A}d_v \leq 11d\sqrt{C'}a/C$. Applying the inequality of arithmetic and geometric means, we know $\prod_{v\in A}d_v\leq (\sum_{v\in A}d_v/a)^{a}$.
    Thus, 
    \[
        \prod_{v\in A}d_v\leq \begin{cases}
            (1.1dC'a/n)^a&  \text{if } a\geq 10n/(C\sqrt{C'})\\
           (11d\sqrt{C'}/C)^{a} & \text{otherwise}  
            
        \end{cases}
    \]
    When $a\geq 10n/(C\sqrt{C'})$, the number of such sets $A$ and $N_\Gamma(A)$ is upper bounded by $2^{2n}$. Using the fact that $a\leq n/(2C')$, the probability there exists such set $A$ is at most 
    \[
        2^{2n} (1.1C'a/n)^{a\rho \log n}\leq 2^{2n} \exp\left(\frac{10\rho}{C\sqrt{C'}}\log_2 (1.1/2) \cdot n\log n\right) = o(1)\,.
    \]
    Otherwise, the number of such sets $A$ and $N_\Gamma(A)$ is upper bounded by $\binom{n}{a}\binom{da}{C'a}$. Thus, the probability there exists such set $A$ is at most 
    \[
        \binom{n}{a}\binom{da}{C'a} (11\sqrt{C'}/C)^{a\rho \log n} \leq \exp\bigl(a\log n + C'a\log (d/C') + \log(11\sqrt{C'}/C)\rho a\log n\bigr) = o(1)\,,
    \] since $C' = \log\log C$ and $C$ is sufficiently large. In both cases, the expansion property is verified. 

    Now we verify the joinedness property. Suppose for contradiction there exists two disjoint sets $A$ of size $a$ and $B$ of size $b$ where $a = b = n/2C'$ and that $e_\Gamma(A,B) = 0$.
    By~\Cref{lem:expander-mixing} and $\lambda\leq d/C$ and $C'=\log\log C$, we know 
    $$e_G(A,B) \geq \frac{d}{n}ab - \lambda\sqrt{ab} =  \frac{dn}{4(C')^2} - \frac{\lambda n}{2C'} \geq \frac{dn}{10(C')^2}\,.$$
    Let $A' = \{v\in A: N_G(v)\cap B \geq d/(10C')\}$. Then,
    \[
        \frac{dn}{10(C')^2}\leq e_G(A,B) \leq d |A'| + \frac{d}{10C'}\left(\frac{n}{2C'} - |A'|\right) \implies |A'|\geq \frac{n}{30(C')^2}\,.
    \]
    Recall that each vertex is visited at least $\rho \log n$ times. Thus, the probability $e_\Gamma(A,B) = 0$ is at most the probability none of the edges between $A'$ and $B$ survives, which is at most 
    \[
        (1-1/(10C'))^{\rho n \log n/(30(C')^2)} \leq \exp\left(-\frac{\rho}{9000(C')^3} n\log n\right) = o(2^{-2n}).
    \]
    It remains to union bound over all such $A$ and $B$ for which there are at most $2^{2n}$ possibilities.
\end{proof}

\subsection{\texorpdfstring{Proof of~\Cref{thm:main}}{Proof of Theorem~\ref{thm:main}}}
We are now ready to prove~\Cref{thm:main}:
\begin{proof}[Proof of~\Cref{thm:main}]
    Let $C_0$ be as in~\Cref{thm:Hamiltonicity}.
    From~\Cref{lem:C-expander}, we know \textbf{whp} $\Gamma$ is a $C'$-expander. By taking $C$ to be sufficiently large, we may assume $C'= \log\log C \geq C_0$. Thus, we may apply~\Cref{thm:Hamiltonicity} to deduce that $\Gamma$ is Hamiltonian, as desired. 
\end{proof}

\section{Concluding remarks}\label{sec:concluding-remarks}
One of the key tools we applied was~\Cref{thm:Hamiltonicity}, which requires only $C$-expanders. One may wonder if~\Cref{thm:main} is true with $(n,d,\lambda)$-graphs replaced by $C$-expanders. This is unfortunately (very) incorrect:
\begin{proposition}
    For any $C \leq 1.1n/\log n$, there exists a $C$-expander for which the cover time is $a n \log n$ for some $a > 1$.
\end{proposition}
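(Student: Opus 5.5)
We would construct an explicit $C$-expander that contains a "trap": a large dense part plus a set of about $n/2$ or so hard-to-hit vertices, arranged so that expansion and joinedness still hold. The guiding heuristic is the lollipop/star phenomenon. The cover time is at least roughly the maximum hitting time times $\log$ of the number of hard targets (Matthews' lower bound, \Cref{lem:Matthews}, applied to a suitable subset). So it suffices to find a $C$-expander with $\Theta(n)$ vertices $v$ such that $H_{u,v}\geq a' n$ for all $u$ in that subset, with $a'>1$.

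Concretely, take a complete graph $K$ on $n/2$ vertices. Attach the remaining $n/2$ vertices $W$ as an independent set, each joined to a set of $D$ vertices of $K$, chosen so that the bipartite graph between $W$ and $K$ is a good bipartite expander with $D$ around $2C$ (or a suitable constant multiple). We then check the two conditions.

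\textbf{Expansion.} A set $X$ with $|X|\le n/(2C)$ either has a vertex in $K$, in which case $N(X)\supseteq K\setminus X$ is huge, or lies in $W$. In the latter case we need the bipartite expander to give $|N(X)|\ge C|X|$ for small sets, which a random $D$-left-regular bipartite graph provides when $D\gg C$ (standard first-moment count).

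\textbf{Joinedness.} Two disjoint sets of size $n/(2C)$ each meet $K$ substantially, or else one lies mostly in $W$. Any set of $W$-vertices of size $\Omega(n/C)$ has neighbourhood covering almost all of $K$, so an edge exists.

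\textbf{Hitting times.} Each $w\in W$ has degree $D$, while the total degree is $2e(G)\approx n^2/4$. By Tetali (\Cref{lem:Tetali}), or directly via the stationary measure and return time, hitting $w$ from the clique takes about $\frac{2e(G)}{D}\cdot(\text{const})$ steps, which is of order $n^2/D\gg n$ when $D\ll n$. Restricting Matthews to $W$ gives a cover time of at least $\Omega((n^2/D)\log n)$. Since $C\le 1.1n/\log n$, we can take $D$ up to order $n/\log n$ while still having $n^2/D\ge a n$ with $a>1$. The exact form $an\log n$ is then obtained by tuning $D$ (and $|W|$), and an upper bound of the same order follows from the corresponding Matthews upper bound.

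The main obstacle is reconciling the parameters. Expansion of $W$-subsets into $K$ forces $D\ge C$, while a large hitting time needs $D$ small relative to $n$. With $C$ as large as $1.1n/\log n$, the degree $D\approx C$ still makes the hitting time $\approx n^2/(2D)\approx n\log n/2.2$ per target, which is too small in that extreme regime. I would therefore first try making $K$ smaller, say of size $\delta n$, so that $K$-edges dominate the volume, and adjust the constant $a$ accordingly, checking the extreme $C$ regime last.
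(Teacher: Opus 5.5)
\textbf{There is a genuine gap: your construction is not a $C$-expander, for two separate reasons.}

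First, joinedness fails outright. $W$ is an independent set of size $n/2\ge n/C$ once $C\ge 2$, so it contains two disjoint sets of size $n/(2C)$ with no edge between them. Your joinedness paragraph only shows that such sets have large neighbourhoods in $K$. Joinedness, however, asks for an edge between the two sets themselves. In fact joinedness forces every $C$-expander to have independence number less than $n/C$. So no design with $\Theta(n)$ pairwise non-adjacent trap vertices can work once $C$ is large.

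Second, expansion fails as well. For $X\subseteq W$ with $|X|=n/(2C)$ you need $|N(X)|\ge n/2=|K|$, i.e.\ $N(X)=K$. So every $k\in K$ must be adjacent to all but fewer than $n/(2C)$ vertices of $W$. This forces $e(W,K)>\frac n2\bigl(\frac n2-\frac n{2C}\bigr)$, i.e.\ an average $W$-degree close to $n/2$ rather than $D\approx 2C$. The remedy you suggest at the end, shrinking $K$ to $\delta n$, goes in the wrong direction: then $|N(X)|\le\delta n<n/2$ for such $X$, and expansion cannot hold at all.

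The missing idea is that the set of trap vertices must be smaller than $n/(2C)$, so that every set of size $n/(2C)$ meets the clique. The paper takes just two trap vertices $u,v$ joined to disjoint $C$-subsets of a clique $K_{n-2}$. It then lower-bounds the cover time by the single hitting time $H_{u,v}\approx(n-1)^2/C$.

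Your Matthews strategy can be rescued in the same spirit. Take $|W|=n/(4C)$ trap vertices, each joined to $C$ private clique vertices.
\begin{itemize}
\item Expansion and joinedness are immediate: for $X\subseteq W$, $N(X)$ has size exactly $C|X|$.
\item An automorphism swapping $a,b\in W$, the commute-time identity $H_{a,b}+H_{b,a}=2mR_{a,b}$, and \Cref{prop:Ruv-lower-bound} give $H_{a,b}\ge 2m/(C+1)\ge n^2/(4C)$.
\item The subset version of \Cref{lem:Matthews}, applied to $W$, gives cover time at least $\frac{n^2}{4C}\log\frac{n}{4C}$.
\end{itemize}
This bound is $\omega(n\log n)$ throughout $C\le 1.1n/\log n$, since $\log\frac n{4C}\ge\log\log n-O(1)$.

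This version is in fact more robust than the paper's sketch. The quantity $(n-1)^2/C$ exceeds $n\log n$ only when $C<(1-o(1))n/\log n$. So near the top of the stated range a single hitting time is not enough; this is exactly the tension you noticed. The extra $\log|W|$ factor from Matthews is what covers that regime.
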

\begin{proof}[Proof sketch]
    Consider a complete graph on $n-2$ vertices together with two special vertices $u,v$, each connecting to $C$ vertices of the complete graph and their neighborhoods are disjoint. Clearly, the graph is a $C$-expander. However, note that the expected number of steps needed to go from $u$ to $v$ is at least $1 + (n-1)/C)\cdot (n-1)> n\log n$. Indeed, the $1$ comes from going from $u$ to the clique. Then, in each step, it will have at most a $C/(n-1)$ chance of landing in $N(v)$ and thus in expectation $(n-1)/C$ steps are needed to land in $N(v)$. Starting from a vertex in $N(v)$, each time there is a $1/(n-1)$ chance of going to $v$ so in expectation it takes $n-1$ such trials. Clearly, the expected number of steps taken to go from $u$ to $v$ is a lower bound on the cover time, which finishes the proof.
\end{proof}
One natural strengthening of~\Cref{thm:main} would be to consider following hitting time problem: Let $\tau_{HC}(G)$ be the minimum $t\in \mathbb N$ for which the trace graph $\Gamma$ becomes Hamiltonian and $\tau_{1}$ the minimum $t\in \mathbb N$ such that $\Gamma$ has minimum degree $1$.
For $(n,d,\lambda)$-graphs with $d/\lambda \geq C$ or a random $d$-regular graph for $d$ sufficiently large, is it true that \textbf{whp} $\tau_{HC} = \tau_1 + 1$? Note that $\tau_{HC}\geq \tau_1 + 1$ is necessary since a Hamilton cycle uses two incident edges from each vertex.
For complete graph the corresponding hitting time result is known~\cite{Frieze-et-al} .

Another natural direction is to consider relations between cover time, strong cover time and the blanket time. For $(n,d,\lambda)$-graph with $d/\lambda$ sufficiently large, in~\Cref{thm:main-strong-cover}, we have established that the cover time and the strong cover time differ only by a factor of $1+o(1)$. In general, can the strong cover time always be controlled by the cover time by a constant factor? 
As mentioned in the introduction, ~\Cref{lem:number-of-vists} implies the existence of a $\delta = \delta(\eps)\in (0,1)$ such that the $\delta$-blanket time $t_{bl}(G,\delta)$ and the cover time differ only a factor of $1+o(1)$ for $(n,d,\lambda)$-graphs with $d/\lambda$ sufficiently large. Are there other natural graph classes that the same conclusion holds? When the cover time of a graph is $(1+o(1))n\log n$ (almost match the general lower bound $(1-o(1))n\log n)$, does such $\delta$ necessarily exist?

\end{document}